\newtheorem{thm}{Theorem}[section]
\newtheorem{lem}[thm]{Lemma}
\newtheorem{prop}[thm]{Proposition}
\newtheorem{cor}[thm]{Corollary}
\newtheorem{remark}[thm]{Remark}
\begin{document}

\newcommand{\FN}{\mathbb{N}}
\newcommand{\FZ}{\mathbb{Z}}  
\newcommand{\FC}{\mathbb{C}}  
\newcommand{\FR}{\mathbb{R}}  
\newcommand{\FQ}{\mathbb{Q}}  

\newcommand{\fa}{\mathfrak{a}}
\renewcommand{\a}{\mathfrak{a}}
\newcommand{\fb}{\mathfrak{b}}
\newcommand{\fc}{\mathfrak{c}}
\newcommand{\fm}{\mathfrak{m}}
\newcommand{\fo}{\mathfrak{o}}
\newcommand{\fp}{\mathfrak{p}}
\newcommand{\fq}{\mathfrak{q}}

\title[partial zeta funtion]{Polynomial behavior of special values of partial zeta function of real quadratic fields at s=0}

\author{Byungheup Jun and Jungyun Lee}
\email{byungheup@gmail.com} \email{lee9311@kias.re.kr}
\address{School of Mathematics,  Korea  Institute for Advanced Study\\
Hoegiro 87, Dongdaemun-gu, Seoul 130-722, Korea}

\begin{abstract}
We compute the special values of partial zeta function at $s=0$ for family of real quadratic fields $K_n$ and ray class ideals $\fb_n$ such that $\fb_n^{-1} = [1,\delta(n)]$ where the continued fraction expansion of $\delta(n)$ is purely periodic and
each terms are polynomial in $n$ of bounded degree $d$.
With an additional assumptions, 
we prove that the special values of  partial zeta function at $s=0$  
behaves as quasi-polynomial. 
We apply this to obtain that the special values the Hecke's $L$-functions at $s=0$ for a family of for a Dirichlet character $\chi$ behave as  quasi-polynomial as well. We compute out explicitly the coefficients of the quasi-polynomials. 
Two examples satisfying the condition are presented and for these families the special values of the partial zeta functions at $s=0$. 

\end{abstract}

\date{2011.5.23}
\maketitle
\tableofcontents

\section{Introduction}

This note is complementary to our previous work ``the behavior of
Hecke's L-function at s=0''(\cite{J-L}). In \cite{J-L}, a series of real
quadratic fields $K_n$ with fixed ideals $b_n^{-1} = [1,\delta(n)]$ and a mod-$q$ Dirichlet character
$\chi$, which yields a mod-$q$ ray class character $\chi_n:=\chi\circ N_{K_n/\FQ}$ for each $K_n$,
are considered. 
We gave a condition on $\delta(n)$ to ensure a controlled 
behavior of the special value at $s=0$ of the Hecke's partial $L$-function of $b_n$ 
twisted with a ray class character $\chi_n$ of  modulus $q$ in a certain way.
More precisely, we called this property `linearity of partial Hecke $L$-values' when
$$
L_{K_n}(0, \fb_n, \chi_n) = A(r) k + B(r)
$$
where $n=qk+r$ and for some constants $A(r),B(r)$ associated to $r=0, 1,\ldots,r-1$.
The coefficients $A(r), B(r)$ can be explicitly computed and are shown to be lying inside
$\frac{1}{12q^2} \FZ[\chi(1),\chi(2),\ldots,\chi(r-1)]$. 
Roughly written the
criterion is that the terms of the continued fractions of
$\delta(n)$ has to be linear function in $n$. 

The proto-typical result of this linearity
appeared first in Biro's proof of Yokoi's conjecture(cf.  \cite{Yokoi} for the conjecture and \cite{Biro1} for the proof). It was the
key ingredient with a class number one criterion in solving
Yokoi's conjecture.
Later  this moral has been extensively applied 
in solving some class number one problems for some families of real quadratic fields when there
is an appropriate class number one criterion.
(cf. \cite{Biro1},\cite{Biro2},\cite{Lee1},\cite{Lee2},\cite{Lee5},\cite{Lee4}).

In this paper, we deal with similar phenomenon with almost the same assumptions as in \cite{J-L}, but 
we have freed the linearity assumption on the terms of continued fraction expansion of $\delta(n)-1$.
In this setting, the \textit{linearity} of the values of $\zeta$- or $L$-values in family is generalized to a `polynomial' expression . Written precisely, the special values as function in $n$ are taken in a packet of  polynomials in a periodic way. Functions of this type are called \textit{quasi-polynomials}. So in particular the `linearity' in \textit{loc.cit.} should read as quasi-linear function. 
A precise definition and a short discussion of quasi-polynomial
are presented 
in Section $2$ of this article.

Our main result is as follows:\\

\noindent\textbf{Theorem \ref{main}.} 
\textit{Let $\{K_n=\FQ(\sqrt{f(n)})\}_{n\in \FN}$ be a family of real quadratic fields where $f(n)$ is a positive square free integer for each $n$.
Suppose $\fb_n$ is an integral ideal relatively prime to $q$ such
that $\fb_n^{-1} = [1,\delta(n)]$. 
Assume   $\delta(n)-1$ has purely periodic continued fraction expansion  
$$\delta(n)-1=[[a_0(n),a_1(n),\cdots,a_{s-1}(n)]]$$
of period $s$ 
independent of $n$ and  $a_i(x)$ are polynomials of degree smaller than $d$. 
If $N_{K_n}(\fb_n(C+D\delta(n)))$ modulo $q$  is a function only depending on $C$, $D$ and $r$ for $n=qk+r$, 
then 
$$
\zeta_q(0,\fb_n(C+D\delta(n))) = A_0(r) + A_1(r) n + \ldots + A_d(r) n^d
$$
where $n=qk+r$, $0\le r < q$ and
for some constants $A_0(r), A_1(r),\ldots, A_d(r)$ depending on $r$.
Furthermore, 
$
A_i(r) \in \frac{1}{12q^{i+2}}\FZ.
$}\\

A part of the result presented here was  announced  for the Hecke's $L$-values  in a family at the end of
\cite{J-L}.  
Actually, the consideration of the partial zeta function for ray class ideals instead of 
the partial Hecke's L-function of an ideal ameliorates the previous in two folds. Firstly, we give here an expression
of Hecke's $L$-function as twisted sum of the ray class partial zeta function:
\begin{equation*}
L_{K_n}(\chi,s,\fb) = \sum_{(C,D)\in \tilde{F'_{\delta}}}\chi((C+D\delta)\fb)\zeta_q(s,(C+D\delta)\fb)
\end{equation*}
(c.f. Proposition \ref{2.2}.).
If we restrict to the case $d=1$, using this identification, we can recover directly
the linearity of the values
of Hecke's $L$-functions at $s=0$(See Corollary \ref{2.5}.). For general $d$, we obtain the higher
degree generalization of the main result presented in \cite{J-L}. This is previously announced in \textit{loc.cit.}
Secondly, in principle the partial zeta function of a ray class
ideal contains finer information than the partial Hecke's $L$-function of the associated ideal. 

Our main idea is to develop and examine appropriate cone decomposition similar to Shintani and Zagier for partial zeta functions(cf. \cite{Shintani}, \cite{Zagier}). Once a simple cone is given, 
one can evaluate the Riemann sum for the partial zeta value at $s=0$ over the cone 
and it is simply expressed using
values of Bernoulli polynomial.  
For a ray class partial zeta values, we need
to sum over larger cone than that for ideal class  partial zeta or $L$ function. 
Unfortunately, the decompositions 
for a family of real quadratic fields with ideals are far from being uniform. 
But surprisingly again the cone decomposition for the associated ideal class behaves in a uniform way.
In particular, if we parameterize the orbit of the ray class ideal $\fb_n$ acted by the totally positive 
unit group  using a pair of integers $(C,D)$
such that $0\le C,D\le q-1$, this action has $q$-periodicity in the family parametrized by $n$. 



This paper is composed as follows. In Section 2, we recall the notions of ray class partial zeta function and
some necessary stuffs. Then we state the main results. 
Section 3 is devoted to an expression of the partial zeta value at $0$, where we use the cone decomposition \textit{\`a la} Shintani and Zagier. In Section 4, we show that this domain decomposition 
is acted by the totally positive unit group modulo units congruent to $1$ modulo $q$, which has certain
invariance property in the family. This concludes the most important part of our main  theorem that
the special values behave in quasi polynomial for the family satisfying our assumption. 
In Section 5, we compute the coefficients of the quasi polynomials in the expression of partial zeta values 
to restrict possible coefficients. Finally, in Section 6, we compute the quasi polynomials for
two explicitly chosen families.  

\section{Decomposition of L-function into partial zeta function}

\subsection{Partial zeta function of a ray class}
For a number field $K$ and a fixed positive rational integer  $q$ as a conductor, 
the partial zeta function of the ray class of an ideal $\fa$ in $K$ is defined as
$$\zeta_q(s,\fa):=\sum_{\stackrel{\fc\sim_q\fa}{integral}}N(\fc)^{-s}$$
where $\fc\sim_q\fa$ means that $\fc=(\alpha)\fa$ for totally positive $\alpha\equiv 1\pmod{q}.$
For $\fc$ to be integral, $\alpha$ should be an element of $1+q\fa^{-1}$ and $\fc = \fa$ if and only
if $\alpha \in E_q^+$, where $E_q^+$ is the multiplicative group of totally positive units congruent to $1$ modulo $q$. Thus we have 
\begin{equation}\label{pz}
\zeta_q(s,\fa)=\sum_{\alpha\in(1+q\fa^{-1})^+/E_q^+} N(\fa \alpha)^{-s} 
 \end{equation}
This should not be confused with the partial zeta function for an ordinary ideal class. 
Note that this definition works not only for an ideal relatively prime to $q$ but for general ideal of $K$.

From now on, we assume $K$ to be a real quadratic field and 
we consider ray class partial zeta function of an integral ideal $\fb$ relatively prime to $q$ 
such that $\fb^{-1}= [1,\delta]$.
In this case, we have a description of the partial zeta function of other ray class than $\fb$
but in the same ideal class.  

\begin{lem}\label{par_zeta}
Suppose $C,D$ are integers such that $0\le C,D \le q-1$ and $((C+D\delta)\fb, q)=1$.
Then we have
$$
\zeta_q(s,(C+D\delta)\fb)=\sum_{
 \alpha\in(\frac{C+D\delta}{q}+\fb^{-1})^+/E_q^+}
 N(q\fb\alpha)^{-s}.$$
\end{lem}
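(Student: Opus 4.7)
The plan is to derive the identity as a direct change of variable from the definition \eqref{pz}. Applying \eqref{pz} to the integral ideal $\fa = (C+D\delta)\fb$ and using that $\fa^{-1} = (C+D\delta)^{-1}\fb^{-1}$, I write
$$
\zeta_q(s,(C+D\delta)\fb) = \sum_{\gamma \in (1 + q(C+D\delta)^{-1}\fb^{-1})^+ / E_q^+} N((C+D\delta)\fb\gamma)^{-s},
$$
and then introduce the new summation variable $\alpha := (C+D\delta)\gamma/q$.

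The next step is to verify that $\gamma \mapsto \alpha$ is a bijection between the relevant cosets. An element $\gamma$ lies in $1 + q(C+D\delta)^{-1}\fb^{-1}$ exactly when it has the form $\gamma = 1 + q\xi/(C+D\delta)$ for some $\xi \in \fb^{-1}$; under the substitution this becomes $\alpha = (C+D\delta)/q + \xi$, that is, $\alpha \in (C+D\delta)/q + \fb^{-1}$, and the correspondence is reversible. The norm transforms correctly because $(C+D\delta)\gamma = q\alpha$, so
$$
N((C+D\delta)\fb\gamma) = N(\fb \cdot (C+D\delta)\gamma) = N(q\fb\alpha).
$$
The action of $E_q^+$ by multiplication on $\gamma$ corresponds to the same multiplicative action on $\alpha$, so the substitution descends to a bijection of quotients by $E_q^+$.

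The only remaining subtlety, and the one that requires the most care, concerns the totally positive condition. Under the substitution, $\gamma$ is totally positive iff $\alpha$ has the same sign pattern as $(C+D\delta)$ at each real embedding; since a ray class representative $(C+D\delta)\fb$ is specified by a totally positive generator, this matches the set $((C+D\delta)/q + \fb^{-1})^+$ appearing in the statement. Once the bijection, the norm identity, the compatibility with the $E_q^+$-action, and the positivity correspondence are in place, substituting $\alpha$ for $\gamma$ term by term yields
$$
\zeta_q(s,(C+D\delta)\fb) = \sum_{\alpha \in ((C+D\delta)/q + \fb^{-1})^+/E_q^+} N(q\fb\alpha)^{-s},
$$
which is the desired identity.
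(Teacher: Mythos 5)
Your proof is correct and is essentially the paper's own argument: the authors likewise apply the definition (\ref{pz}) to $\fa=(C+D\delta)\fb$ and perform the substitution $\alpha=\beta\frac{C+D\delta}{q}$, leaving the verification of the coset bijection, the norm identity, and the $E_q^+$-equivariance implicit. Your only extra care is the positivity check, which indeed rests on $C+D\delta$ being totally positive (guaranteed by $C,D\ge 0$ together with the standing normalization $\delta>1$, $0<\delta'<1$), rather than on any convention about ray class representatives.
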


\begin{proof}
From (\ref{pz}) we have
\begin{equation}\label{zeta1}
\zeta_q(s,(C+D\delta)\fb)=\sum_{\beta\in (1+ q(C+D\delta)^{-1}\fb^{-1})^+/E_q^+}N((C+D\delta)\fb\beta)^{-s}.
\end{equation}
put $\alpha=\beta\frac{C+D\delta}{q}$. This shows the equality. 
\end{proof}

Let us define 
$$
F := \{(C,D)\in \FZ^2| 0 \le C,D \le q-1, (C,D) \ne (0,0)\}
$$
and its subset
\begin{equation}\label{Funda}
F_{\delta}:=\{(C,D)\in \FZ^2 | 0\leq C,D\leq q-1, ((C+D\delta)\fb,q)=1\}.
\end{equation}

An element  $(C,D)$ of $F$ sends an ideal $\fa$ to another ideal $(C+D\delta)\fa$. 
If $(C,D)\in F_\delta$, $(C+D\delta)$ sends $\fb$ to another ideal  relatively prime to $q$.

The group $E^+$ of  totally positive units of $K$ acts on $F$ 
by 
\begin{equation}\label{action}
 \epsilon\ast(C+D\delta)=C'+D'\delta 
\end{equation}
where
 $ (C+D\delta)\epsilon+q\fb^{-1}=C'+D'\delta+q\fb^{-1}\,\,\text{for}\,\, \epsilon\in E^{+}.$
Note that this action is inherited to $F_\delta$. 

Let 
$\tilde{F}' \subset F$ be a fundamental set for the quotient $F/E^+$ and 
$\tilde{F}_{\delta}' \subset \tilde{F}'$ be  a fundamental set for the quotient $F_{\delta}/E^+$ .

\subsection{Decomposition of partial Hecke L-function}
Now we consider the partial Hecke $L$-function for $(\fb,\chi)$ where $\chi$ is a ray class character of modulus $q$. Recall the partial Hecke $L$-function associated to $(\fb,\chi)$ is defined as follows:
$$
L_K(s,\chi,\fb) = \sum_{\fc \sim\fb} \frac{\chi(\fc)}{N(\fc)^{s}}
$$
where the summation runs over every integral ideal $\fc$ principally equivalent to $\fb$. 

Keeping the notations introduced before, 
we obtain an expression of partial Hecke $L$-function of $(\fb,\chi)$ as sum of
ray class partial zeta functions of ray class ideals principally equivalent to $\fb$ twisted with values of
$\chi$. Since any ray class
associated to an ideal class of $\fb$ can be represented in the form $(C+D\delta)\fb$ and $E_q^+$ 
action on $(C,D)$ preserves the ray class, this sum is taken over $\tilde{F}'$. Moreover, as $\chi$ values $0$ 
at $(C+D\delta)\fb$ for $(C,D)$ outside $\tilde{F}'_\delta$, the summation actually 
runs over $\tilde{F}'_\delta$. 

Summarizing this discussion, we have
\begin{prop}
\label{2.2}
 Let $q$ be a positive integer. 
For an ideal $\fb\subset K$ relatively prime to $q$ and a ray class character $\chi$ modulo $q$, we have
\begin{equation*}
L(\chi,s,\fb) = \sum_{(C,D)\in \tilde{F'_{\delta}}}\chi((C+D\delta)\fb)\zeta_q(s,(C+D\delta)\fb).
\end{equation*}
\end{prop}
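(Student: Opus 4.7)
The plan is to decompose the defining Dirichlet series of $L(\chi,s,\fb)$ by first grouping integral ideals according to their ray class modulo $q$ (on which $\chi$ is constant), and then parameterizing the resulting ray classes by $\tilde{F}'_\delta$. Starting from
$$
L(\chi,s,\fb) \;=\; \sum_{\fc \sim \fb} \chi(\fc)\, N(\fc)^{-s},
$$
I partition the summation by ray class modulo $q$. Since $\chi$ is constant on each ray class, the inner sum over integral ideals in a fixed ray class $\fc$ is exactly $\zeta_q(s,\fc)$ by its defining formula, producing $L(\chi,s,\fb) = \sum_{\fc} \chi(\fc)\, \zeta_q(s,\fc)$, where $\fc$ ranges over a set of representatives of the ray classes contained in the ideal class of $\fb$.

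The second step parameterizes these ray classes by $\tilde{F}'$. Every integral ideal in the ideal class of $\fb$ has the form $\alpha\fb$ with $\alpha \in (\fb^{-1})^+$; using $\fb^{-1}=[1,\delta]$, write $\alpha = A+B\delta$ with $A,B \in \FZ$, and reduce modulo $q\fb^{-1} = [q,q\delta]$ to obtain $\alpha \equiv C+D\delta \pmod{q\fb^{-1}}$ with $0 \le C,D \le q-1$. I then verify that this reduction preserves the ray class: writing $\alpha = (C+D\delta) + q\beta$ with $\beta \in \fb^{-1}$, the ratio $\alpha/(C+D\delta) = 1 + q\beta/(C+D\delta)$ lies in $1 + q((C+D\delta)\fb)^{-1}$, because $\beta (C+D\delta)^{-1} \in \fb^{-1}(C+D\delta)^{-1} = ((C+D\delta)\fb)^{-1}$; by formula (\ref{pz}) this is precisely the condition for $\alpha\fb$ and $(C+D\delta)\fb$ to lie in the same ray class.

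The third step identifies when two pairs $(C,D),(C',D') \in F$ yield the same ray class. If $(C+D\delta)\fb \sim_q (C'+D'\delta)\fb$, then $C+D\delta = \epsilon\gamma(C'+D'\delta)$ for some $\epsilon \in E^+$ and some $\gamma \in 1 + q((C'+D'\delta)\fb)^{-1}$; multiplying out and reducing modulo $q\fb^{-1}$ gives $C+D\delta \equiv \epsilon(C'+D'\delta) \pmod{q\fb^{-1}}$, which is exactly the relation $(C,D) = \epsilon \ast (C',D')$ defined in (\ref{action}). The converse follows from the same reduction-preserves-class argument as above. Hence $\tilde{F}'$ is a complete set of representatives for the ray classes in the ideal class of $\fb$. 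Since $\chi$ vanishes on ideals not coprime to $q$, only orbits meeting $F_\delta$ survive, and the summation collapses to $\tilde{F}'_\delta$, yielding the claimed identity.

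The main obstacle is the second-into-third step: one must verify carefully that the combinatorial $E^+$-action (\ref{action}), defined via reduction modulo $q\fb^{-1}$, coincides exactly with the intrinsic ray-class equivalence $\sim_q$ restricted to principal ideals of the form $\alpha\fb$; in particular the condition $\alpha - 1 \in q\fa^{-1}$ underlying (\ref{pz}), rather than the coarser $\alpha - 1 \in q\fo_K$, is what makes the action land cleanly on the coset space. Once this correspondence is in place, the rest of the proof is a purely formal interchange of summations.
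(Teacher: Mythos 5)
Your proposal is correct and follows essentially the same route as the paper: partition the Dirichlet series by ray class, parameterize the ray classes inside the ideal class of $\fb$ by $\tilde{F}'$ via reduction modulo $q\fb^{-1}$, identify the resulting redundancy with the $E^+$-action (\ref{action}), and drop the pairs outside $F_\delta$ because $\chi$ vanishes there. The paper's own proof is just the formal interchange of summations combined with Lemma \ref{par_zeta}, deferring exactly the verifications you spell out (that reduction mod $q\fb^{-1}$ preserves the ray class and that the $E^+$-action matches $\sim_q$) to the discussion preceding the proposition.
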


\begin{proof}
\begin{equation*}
\begin{split}
&L_K(s,\chi,\fb) =
\sum_{\begin{subarray}{c}\fa \sim \fb\\integral \end{subarray}}\chi(\fa)N(\fa)^{-s}\\
&= \sum_{(C,D)\in \tilde{F'_{\delta}}}\chi((C+D\delta)\fb)
 \sum_{
 \alpha\in(\frac{C+D\delta}{q}+\fb^{-1})^+/E_q^+}
 N(q\fb\alpha)^{-s}
\end{split}
\end{equation*}
Applying Lemma (\ref{par_zeta}), we have done the proof.
\end{proof}




\subsection{Continued fractions} 
To state our main result properly, we need to recall the notions of continued fractions and quasi polynomials. 

Let $[[a_0,a_1,\ldots,a_n]]$ be the purely periodic continued fraction
	$$[a_0,a_1,a_2,\ldots,a_n,a_0,a_1,\ldots],$$
	where
	$$
	[a_0,a_1,a_2,\ldots]:= a_0 + \cfrac{1}{a_1 +\cfrac{1}{a_2 + \cdots}}.
	$$

\subsection{Quasi-polynomials}

If $f(n)$ is a function of $\FZ$ such that 
$$f(n):= c_d(n)n^d +c_{d-1}(n) n^{d-1} +\ldots+c_0(n)$$ 
for some periodic functions $c_k(n)$  then we call $f(n)$  a \textit{quasi-polynomial} of degree $d$.
When $c_k(n)$ has a common period $q$ for $k=0,1,\cdots,d$, 
we say $f(n)$ has (quasi-)period $q$. 
We call $c_k(n)$ the $k$-th coefficient (function) of $f(n)$.

\begin{prop}\label{quasi}
$f(n)=c_d(n)n^d+c_{d-1}(n)n^{d-1}+\cdots+c_0(n)$ is quasi-polynomial of period $q$
if and only if for $n=qk+r$ $$f(n)=a_d(r)k^d+a_{d-1}(r)k^{d-1}+\cdots+a_0(r).$$
Moreover, for $j=0,1,\cdots ,d$,  $$c_j(r)=\sum_{i=j}^{d}a_i(r)
\begin{pmatrix}
      i   \\
      j 
\end{pmatrix} (-r)^{i-j}q^{-i}.$$
\end{prop}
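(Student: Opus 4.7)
The plan is to prove the equivalence by direct substitution of $n=qk+r$ (and of $k=(n-r)/q$) together with the binomial theorem, and then to extract the coefficient formula from the reverse substitution.

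For the forward direction, assuming $c_0(n),\ldots,c_d(n)$ are periodic of period $q$, I would fix a residue $r\in\{0,1,\ldots,q-1\}$ and write $n=qk+r$, so that $c_j(n)=c_j(r)$ for every $j$. Expanding $(qk+r)^j$ by the binomial theorem and regrouping by powers of $k$ shows
\[
f(n) = \sum_{j=0}^d c_j(r)(qk+r)^j = \sum_{i=0}^d a_i(r)\,k^i,
\]
where $a_i(r) = q^i \sum_{j=i}^d \binom{j}{i} r^{j-i} c_j(r)$. In particular the $a_i$ depend only on the residue $r$, which is exactly the right-hand form.

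For the reverse direction, assume $f(n)=\sum_{i=0}^d a_i(r)k^i$ with $n=qk+r$, so $k=(n-r)/q$. Substituting and expanding $((n-r)/q)^i$ by the binomial theorem gives
\[
f(n) = \sum_{i=0}^d a_i(r) q^{-i} \sum_{j=0}^i \binom{i}{j} (-r)^{i-j} n^j = \sum_{j=0}^d n^j \Bigl(\sum_{i=j}^d a_i(r)\binom{i}{j}(-r)^{i-j}q^{-i}\Bigr).
\]
Reading off the coefficient of $n^j$ produces a function of $n$ that depends only on $r=n\bmod q$, so the inner sum is periodic in $n$ with period $q$; calling it $c_j(n)$ yields both the quasi-polynomial form and the explicit formula
\[
c_j(r) = \sum_{i=j}^d a_i(r)\binom{i}{j}(-r)^{i-j}q^{-i}
\]
stated in the proposition.

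There is no substantive obstacle here; the only thing to be careful about is matching the indexing and the sign conventions when expanding $(n-r)^i$, and verifying that the two change-of-variable maps $(c_j(r))\leftrightarrow (a_i(r))$ are mutual inverses (which is automatic, since one is a unipotent upper-triangular change of basis between $\{n^j\}$ and $\{k^i\}$ after clearing the factor $q^i$).
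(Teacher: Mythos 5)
Your proposal is correct and follows essentially the same route as the paper's own (very terse) proof: substitute $k=q^{-1}(n-r)$, expand by the binomial theorem, rearrange as a polynomial in $n$, and read off $c_j(r)$. You are somewhat more careful in spelling out both directions of the equivalence and noting that the two coefficient maps are inverse triangular substitutions, but there is no difference in method.
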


\begin{proof}
Substitute $k$ with $q^{-i}(n-r)$ in the first expression of $f(n)$. Rearranging this as a form of polynomial in $n$, we obtain the second expression. $c_j(r)$ is easily obtain from the rearrangement. 
\end{proof}

\subsection{Main theorem}

Our main theorem is as follows:

\begin{thm}\label{main}
Let $\{K_n=\FQ(\sqrt{f(n)})\}_{n\in \FN}$ be a family of real quadratic fields where $f(n)$ is a positive square free integer for each $n$.
Suppose $\fb_n$ is an integral ideal relatively prime to $q$ such
that $\fb_n^{-1} = [1,\delta(n)]$. 
Assume 
$\delta(n)-1$  has purely periodic continued fraction expansion
$$\delta(n)-1=[[a_0(n),a_1(n),\cdots,a_{s-1}(n)]]$$
of 
period $s$ independent of $s$ and $a_i(x)$ are polynomials of degree smaller than $d$. 
If $N_{K_n}(\fb_n(C+D\delta(n)))$ modulo $q$  is a function only depending on $C$, $D$ and $r$ for $n=qk+r$, 
then 
$$
\zeta_q(0,\fb_n(C+D\delta(n))) = A_0(r) + A_1(r) n + \ldots + A_d(r)n^d
$$ 
where $n=qk+r, 0\le r< q$ and for some constants $A_0(r), A_1(r), \ldots, A_d(r)$ depending on $r$.
Furthermore, $A_i(r)\in \frac{1}{12 q^{i+2}}\FZ$.
\end{thm}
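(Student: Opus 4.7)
The plan is to follow the Shintani--Zagier cone-decomposition strategy sketched in the introduction, broken into three steps that correspond to Sections~3, 4, and 5 of the paper.

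First, start from the expression given by Lemma~\ref{par_zeta}, which writes the quantity as $\sum_{\alpha} N(q\fb_n\alpha)^{-s}|_{s=0}$ over representatives of $\bigl(\tfrac{C+D\delta(n)}{q}+\fb_n^{-1}\bigr)^{+}\!/E_q^{+}$. Build a fundamental domain for the $E_q^{+}$-action on the totally positive cone of $K_n\otimes\FR$ as a disjoint union of simple $2$-dimensional cones whose edge vectors come from the convergents of the purely periodic continued fraction $\delta(n)-1 = [[a_0(n),\ldots,a_{s-1}(n)]]$. Because the period $s$ is independent of $n$, the combinatorial type of the fan is the same for every $n$; only the edge vectors vary, and they vary polynomially in $n$ of degree bounded by $d$.

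Second, evaluate the contribution of each simple cone to the partial zeta value at $s=0$. On a cone with edges $v_{1},v_{2}$ and base point $\alpha_{0}$, the Shintani--Zagier formula gives a finite $\FQ$-combination of Bernoulli polynomial values $B_{2}(x_{i})$ and $B_{1}(x_{i})$ at the fractional parts $x_{i}$ of the coordinates of $\alpha_{0}$ in the basis $(v_{1},v_{2})$, with weights polynomial in the continued-fraction entries $a_{j}(n)$. The norm-modulo-$q$ hypothesis is precisely what forces these $x_{i}$ to lie in $\tfrac{1}{q}\FZ$ and to depend only on $(C,D,r)$, so that the values $B_{2}(x_{i}),B_{1}(x_{i})\in\tfrac{1}{12q^{2}}\FZ$ are $r$-periodic. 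Summing over the fixed number $s$ of cones yields $\zeta_{q}(0,\fb_{n}(C+D\delta(n)))$ as a polynomial in $n$ of degree at most $d$ whose coefficients depend only on $r$.

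Finally, converting between the $k$-polynomial and $n$-polynomial forms via Proposition~\ref{quasi} extracts the precise denominator bound $A_{i}(r)\in\tfrac{1}{12q^{i+2}}\FZ$: the factor $\tfrac{1}{12}$ comes from the leading coefficient of $B_{2}$, one factor of $\tfrac{1}{q^{2}}$ is produced by $B_{2}$ of a fraction with denominator $q$, and the extra $\tfrac{1}{q^{i}}$ attached to the coefficient of $n^{i}$ is the inherent loss from the substitution $k=(n-r)/q$.

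The main obstacle will be the uniformity step. The Shintani fan built from the convergents of $\delta(n)-1$ is the natural fundamental domain for the full totally positive unit group $E^{+}$, but the theorem really asks for one invariant under the strictly smaller subgroup $E_{q}^{+}$. Showing that the $[E^{+}:E_{q}^{+}]$-fold refinement of the fan introduces no $n$-dependence beyond the residue class $r$ is exactly what Section~4 must establish, and it is the place where the norm-modulo-$q$ hypothesis is doing all of the work.
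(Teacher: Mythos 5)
Your overall architecture (cone decomposition, Bernoulli values at $q$-rational points, orbit periodicity in $r$, then the $k\mapsto (n-r)/q$ conversion for the denominator bound) matches the paper's, but there is a genuine gap at the heart of Step~1: the claim that ``the combinatorial type of the fan is the same for every $n$'' with a fixed number $s$ of cones is false, and the paper's introduction warns about exactly this (``the decompositions for a family of real quadratic fields with ideals are far from being uniform''). The Shintani--Zagier cones are indexed not by the ordinary continued fraction of $\delta(n)-1$ but by the \emph{minus} continued fraction $\delta(n)=((b_0(n),\ldots,b_{m(n)-1}(n)))$, whose period $m(n)=S_{\mu(s)s}(n)$ is a sum of the odd-indexed $a_j(n)$ and therefore itself grows polynomially in $n$ (see Eq.~(\ref{period})). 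So the number of cones is $\lambda(n)\,m(n)$, not $s$, and your proposed mechanism for the degree-$d$ polynomial behavior --- fixed fan, weights polynomial in the $a_j(n)$ --- is not the one that operates. In the actual formula (\ref{eq}) almost all weights are $b_i(n)=2$; the polynomial in $n$ arises because the sum runs over polynomially many terms, and taming that requires the two facts your outline never engages: the $x^i_{C+D\delta(n)}$ form an arithmetic progression mod $\FZ$ on each stretch $S_l(n)\le i\le S_{l+1}(n)$ and are $q$-periodic there (Propositions \ref{pe} and \ref{inv}), so the long sum splits into (a degree-$d$ polynomial in $k$ counting complete periods, via Lemma \ref{degree}) times a one-period sum depending only on $r$, plus a bounded remainder (Eqs.~(\ref{sum1})--(\ref{sum2}) and (\ref{su4})). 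Without this, Step~2's conclusion that the total is a degree-$d$ polynomial in $n$ with $r$-periodic coefficients does not follow.

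Two smaller inaccuracies: the fact that the $x_i$ lie in $\frac1q\FZ$ is automatic from $\frac{C+D\delta}{q}+\fb^{-1}\subset\frac1q\fb^{-1}$ and has nothing to do with the norm-mod-$q$ hypothesis; that hypothesis is used only to make $F_{\delta(n)}$ and the $E^+/E_q^+$-orbits of $(C,D)$ independent of $k$ (Proposition \ref{period-orbit}), which is what lets the outer sum in Proposition \ref{orbit} be taken over an $r$-dependent set $Orb_{CD}(r)$. Your closing paragraph correctly identifies the $E^+$ versus $E_q^+$ refinement as the content of Section~4, and your account of the final denominator bookkeeping via Proposition \ref{quasi} is right, but the proof as proposed cannot be completed until the fixed-fan assumption is replaced by the periodic-arithmetic-progression argument above.
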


\begin{cor} 
\label{2.5}Let $\chi$ be a Dirichlet character modulo $q$ for a positive integer $q$ and $\chi_n$ be a ray class character modulo $q$ defined by $\chi\circ N_{K_n}$. With the same assumption of Theorem \ref{main}, we have  
$L_{K_n}(0,\chi_n,\fb_n)$ is quasi-polynomial with degree $d$ and period $q$ and the values of $i$-th coefficient functions are in $\frac{1}{12q^{i+2}}\FZ[\chi(1),\chi(2)\cdots\chi(q)].$
\end{cor}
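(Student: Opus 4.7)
The plan is to apply Proposition \ref{2.2} to write
\begin{equation*}
L_{K_n}(0,\chi_n,\fb_n) = \sum_{(C,D)\in \tilde{F}'_\delta} \chi_n\bigl((C+D\delta(n))\fb_n\bigr)\,\zeta_q\bigl(0,(C+D\delta(n))\fb_n\bigr),
\end{equation*}
and then treat the character factor and the zeta factor of each summand separately. For the character factor, note that $\chi_n = \chi \circ N_{K_n/\FQ}$, so the value equals $\chi\bigl(N_{K_n}((C+D\delta(n))\fb_n)\bigr)$. By the hypothesis that $N_{K_n}(\fb_n(C+D\delta(n)))$ modulo $q$ depends only on $C$, $D$ and $r$, the same is true of this character value, and it plainly lies in $\FZ[\chi(1),\chi(2),\ldots,\chi(q)]$.

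For the zeta factor, I would invoke Theorem \ref{main} termwise: for each fixed $(C,D)$ the value $\zeta_q(0,(C+D\delta(n))\fb_n)$ is a polynomial $A_0^{C,D}(r) + A_1^{C,D}(r)n + \cdots + A_d^{C,D}(r) n^d$ in $n$ with $A_i^{C,D}(r) \in \frac{1}{12q^{i+2}}\FZ$. Before summing, one must check that the indexing set $\tilde{F}'_\delta$ can be taken to depend only on $r$: the set $F_\delta$ is cut out by the coprimality of $N_{K_n}(\fb_n(C+D\delta(n)))$ with $q$, which by hypothesis depends only on $r$, and the $q$-periodicity of the $E^+$-action on $F$ promised in Section 4 allows one to pick representatives of $F_\delta/E^+$ uniformly in each residue class $r$.

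Granted this, summing the product of the character value (an element of $\FZ[\chi(1),\ldots,\chi(q)]$ depending only on $C,D,r$) with the polynomial-in-$n$ zeta value, and then collecting powers of $n$, yields a quasi-polynomial of degree $d$ and period $q$ whose $i$-th coefficient function takes values in $\frac{1}{12q^{i+2}}\FZ[\chi(1),\ldots,\chi(q)]$, as required. The main obstacle I anticipate is the careful bookkeeping needed to make the fundamental set $\tilde{F}'_\delta$ and the $E^+$-orbit structure honestly periodic in $r$; once the Section 4 machinery is brought to bear, the remainder of the argument is purely algebraic.
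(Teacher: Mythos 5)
Your proposal is correct and follows exactly the route the paper intends: the corollary is meant to fall out of Proposition \ref{2.2} combined with Theorem \ref{main} applied termwise, with the $q$-periodicity of $F_{\delta(n)}$ and of the $E^+$-orbit structure (Proposition \ref{period-orbit} and the preceding lemma in Section 4) guaranteeing that the index set and the character values depend only on $r$. The paper gives no explicit proof of the corollary, and your write-up supplies precisely the bookkeeping it leaves implicit.
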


Note in \cite{J-L}, the linearity is the quasi-linearity in the second form in Proposition \ref{quasi} written as a polynomial in $k$, while we take 
the first form as shape of polynomial in $n$ in this article.

\section{Shintani-Zagier decomposition and partial zeta values}

A real quadratic field $K$ is diagonally embedded into its Minkowski space $K_{\FR} \simeq \FR^2$ by $\iota=(\tau_1,\tau_2)$, where $\tau_1,\tau_2$ are two real embeddings of $K$.
The multiplicative action of ${E_q}^+$ on $K^+$ induces an action on $(\FR^2)^+$ by extending in
coordinate-wise way:
$$\epsilon\circ(x,y)=(\tau_1(\epsilon)x,\tau_2(\epsilon)y).$$
A fundamental domain $\frak{D}_{\FR}$ of $(\FR^2)^+/E_q^+$ 
is given as
\begin{equation}\label{fun}
\frak{D}_{\FR} := 
\{x\iota(1)+y\iota(\epsilon^{-\lambda})|x>0,y\geq0\} \subset (\FR^2)^+
\end{equation}
where $\epsilon^\lambda$ is the totally positive generator of $E_q$ and $\lambda=[E^+:E^+_q]$. 

We fix an integral ideal $\fb$ 
such that $\fb^{-1}=[1,\delta]$. Moreover we assume that $\delta>1$ and $0 <\delta'<1$.
If we take the convex hull of  $\iota(\fb^{-1})\cap(\FR^2)^+$ in $(\FR^2)^+$, the lattice points
on the boundary are  $\{P_i\}_{i\in\FZ}$ for $P_i \in \iota(\fb^{-1})$.
$P_i$ are uniquely determined by imposing 
that
$P_0=\iota(1), P_{-1}=\iota(\delta)$ and $x(P_i)<x(P_{i-1})$ where $x(P_k)$ is the first coordinate of $P_k$. 
Since $$P_{\lambda m}=\iota(\epsilon^{-\lambda})$$ for some positive integer $m$ (See Proposition 2.4 (5) in \cite{J-L} ), $\frak{D}_\FR$  
is further decomposed into 
$(\lambda\cdot m)$-disjoint union of smaller 
cones:
$$
\frak{D}_\FR = 
\bigsqcup_{i=1}^{\lambda m}\{xP_{i-1}+yP_i\,\,|\,\,x>0,\,\,y\geq0\}.
$$

Accordingly the fundamental set of the quotient
$(\iota(\frac{C+D\delta}{q}+\fb^{-1}) \bigcap {(\FR^2)}^+)/E_q^+$ inside $\frak{D}_\FR$, which
we denote by $\frak{D}$
is given by a disjoint union:
$$
\frak{D} := \bigsqcup_{i=1}^{\lambda m}
\Big{(}\iota(\frac{C+D\delta}{q}+\fb^{-1})\bigcap\{xP_{i-1}+yP_i\,\,|\,\,x>0,\,\,y\geq0\} \Big{)}.
$$

Since $\{P_{i-1},P_i\}$ is a $\FZ$-basis of $\iota(\fb^{-1})$,
there is a unique $(x_{C+D\delta}^i,y_{C+D\delta}^i)\in (0,1]\times[0,1)$
such that
\begin{equation}\label{xy}
x_{C+D\delta}^iP_{i-1}+y_{C+D\delta}^iP_i\in\iota(\frac{C+D\delta}{q}+\fb^{-1}),
\end{equation}
for each $i,C,D\in\FZ.$
Thus
\begin{equation}\label{set}
\begin{split}
\iota\big(\frac{C+D\delta}{q}+\fb^{-1}\big)\bigcap\{xP_{i-1}+yP_i\,\,|\,\,x>0,\,\,y\geq0\} \\
=\{(x_{C+D\delta}^i+n_1)P_{i-1}+(y_{C+D\delta}^i+n_2)P_i\,\,|\,\, n_1, n_2 \in\FZ_{\geq0}\}.
\end{split}
\end{equation}

In \cite{Yamamoto}, Yamamoto found a recursive relation satisfied by $(x_{C+D\delta}^i,y_{C+D\delta}^i)$: 
\begin{equation}\label{Yam}
\begin{split}
x_{C+D\delta}^{i+1} & =\langle  b_ix_{C+D\delta}^i+y_{C+D\delta}^i\rangle,\\
y_{C+D\delta}^{i+1} & =1-x_{C+D\delta}^i,
\end{split}
\end{equation}
where $\langle\cdot\rangle$ is as defined 
as 
$
\langle x \rangle=
 x-[x]$ (resp. $1$) for $x\not\in \FZ$ (resp. for $x\in \FZ$)(See (2.1.3) of {\it loc.sit.}).

Let  $A_i:=x(P_i)$ and $\delta_i=\frac{A_{i-1}}{A_i}$ for all $i\in\FZ$. Then from Eq.(\ref{set}), we obtain the following: 
\begin{equation*}\label{e1}
\zeta_q(s,(C+D\delta)\fb)
=\sum_{i=1}^{\lambda m}\sum_{n_1,n_2\geq0}N((x_{C+D\delta}^i+n_1)\delta_i+(y_{C+D\delta}^i+n_2))^{-s} A_i^{-s}.
\end{equation*}

From Shintani and Yamamoto's evaluation of zeta function at $s=0$ (Lemma 2.5-6 in\cite{J-L}), we find an expression of the partial zeta value in terms of the values of 1st and 2nd Bernoulli polynomials: 
\begin{equation}\label{eq}
\zeta_q(0,(C+D\delta)\fb)
=\sum_{i=1}^{\lambda m} -B_1(x_{C+D\delta}^{i})B_1(x_{C+D\delta}^{i-1})+\frac{b_i}{2}B_2(x_{C+D\delta}^{i})
\end{equation}

Moreover we can express $x_{C+D\delta}^{mi+j},y_{C+D\delta}^{mi+j}$ using epsilon action $\ast$ defined in (\ref{action}).
\begin{lem}\label{epsilon}
Let $\epsilon$ be the totally positive fundamental  unit of $K$. Then  for $i>1$ we have
$$
x_{C+D\delta}^{mi+j}=x_{\epsilon^{i}\ast(C+D\delta)}^j \quad\text{and}\quad y_{C+D\delta}^{mi+j}=y_{\epsilon^{i}\ast(C+D\delta)}^j,
$$
for $j=0,1,2,\cdots,m-1$.
\end{lem}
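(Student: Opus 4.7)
The plan is to exploit the self-similarity of the boundary lattice sequence $\{P_i\}$ under multiplication by the fundamental unit $\epsilon$. Since the convex hull of $\iota(\fb^{-1}) \cap (\FR^2)^+$ is preserved by the coordinate-wise action of $\iota(\epsilon^{-1})$ (which is totally positive and permutes the lattice $\iota(\fb^{-1})$), the labeling of boundary lattice points must be compatible with this action. Combined with the established fact $P_{\lambda m} = \iota(\epsilon^{-\lambda})$ and $P_0 = \iota(1)$, together with the monotonicity condition $x(P_i) < x(P_{i-1})$, I would first verify the shift relation
$$
P_{i+m} = \iota(\epsilon^{-1}) \cdot P_i \quad \text{for all } i \in \FZ,
$$
where the dot denotes Minkowski-space (coordinate-wise) multiplication. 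This is the structural fact doing all the work.

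Next, I would start from the defining property of $x^{mi+j}_{C+D\delta}$ and $y^{mi+j}_{C+D\delta}$ given in (\ref{xy}), namely
$$
x^{mi+j}_{C+D\delta} P_{mi+j-1} + y^{mi+j}_{C+D\delta} P_{mi+j} \in \iota\bigl(\tfrac{C+D\delta}{q} + \fb^{-1}\bigr),
$$
substitute $P_{mi+j-1} = \iota(\epsilon^{-i}) P_{j-1}$ and $P_{mi+j} = \iota(\epsilon^{-i}) P_j$ from the iterated shift, and then multiply through by $\iota(\epsilon^i)$. Since $\epsilon^i \fb^{-1} = \fb^{-1}$, this produces
$$
x^{mi+j}_{C+D\delta} P_{j-1} + y^{mi+j}_{C+D\delta} P_j \in \iota\bigl(\tfrac{\epsilon^i(C+D\delta)}{q} + \fb^{-1}\bigr).
$$

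Finally, I would rewrite the right-hand side using the definition of the action $\ast$ in (\ref{action}): if $\epsilon^i \ast (C+D\delta) = C' + D'\delta$, then $\epsilon^i(C+D\delta) \equiv C' + D'\delta \pmod{q\fb^{-1}}$, and dividing by $q$ gives $\tfrac{\epsilon^i(C+D\delta)}{q} + \fb^{-1} = \tfrac{C'+D'\delta}{q} + \fb^{-1}$. Since $(x^{mi+j}_{C+D\delta}, y^{mi+j}_{C+D\delta}) \in (0,1] \times [0,1)$ lies in the fundamental region, uniqueness of the representation as in (\ref{xy}) forces the claimed equalities with $x^j_{\epsilon^i \ast (C+D\delta)}$ and $y^j_{\epsilon^i \ast (C+D\delta)}$.

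The only genuine obstacle is justifying the shift relation $P_{i+m} = \iota(\epsilon^{-1}) P_i$; everything else is a clean substitution and invocation of uniqueness. This identity reduces to the observation that multiplying by the totally positive unit $\iota(\epsilon^{-1})$ is an $\FR_{>0}^2$-linear automorphism of the Minkowski space that preserves both $\iota(\fb^{-1})$ and the positive cone, hence permutes the boundary lattice points of the convex hull while preserving their cyclic order; combined with the known identification $P_{\lambda m} = \iota(\epsilon^{-\lambda})$, iterating $\lambda$ times forces the period-$m$ shift to be multiplication by $\iota(\epsilon^{-1})$.
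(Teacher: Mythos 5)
Your argument is correct. Note that the paper does not actually prove this lemma; it simply cites Lemma 2.7 of \cite{J-L}, so there is no in-paper argument to compare against. What you have written is the standard Shintani--Zagier argument and is, in substance, what the cited lemma amounts to: the shift relation $P_{i+m}=\iota(\epsilon^{-1})P_i$ (self-similarity of the fan of cones under the totally positive fundamental unit), followed by multiplying the defining relation (\ref{xy}) through by $\iota(\epsilon^{i})$, using that $\epsilon^{i}\fb^{-1}=\fb^{-1}$ and that $\epsilon^{i}(C+D\delta)\equiv C'+D'\delta \pmod{q\fb^{-1}}$ by the definition of $\ast$ in (\ref{action}), and concluding by uniqueness of the representative in $(0,1]\times[0,1)$. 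Your justification of the shift relation is the right one, with two small points worth making explicit: the permutation of the boundary lattice points induced by $\iota(\epsilon^{-1})$ is an index shift by a \emph{fixed} integer because multiplication by the positive real $\tau_1(\epsilon^{-1})$ is monotone on first coordinates (and the shift is by a positive amount since $\tau_1(\epsilon)>1$); and the identification of that shift with $m$ uses $P_0=\iota(1)$, $P_{\lambda m}=\iota(\epsilon^{-\lambda})$, together with the strict decrease of $x(P_i)$, which forces $\lambda m'=\lambda m$ and hence $m'=m$.
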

\begin{proof}
See Lemma 2.7 in \cite{J-L}.
\end{proof}
From above lemma and equation (\ref{eq}), we obtain the following Lemma

\begin{lem}\label{sum}
\begin{equation*}
\begin{split}
&\zeta_q(0,(C+D\delta)\fb)
\\
=&\sum_{i=1}^{m}\sum_{j=0}^{\lambda -1} -B_1(x_{\epsilon^{j}\ast(C+D\delta)}^i)B_1(y_{\epsilon^{j}\ast(C+D\delta)}^i)+
\frac{b_i}{2}B_2(x_{\epsilon^{j}\ast(C+D\delta)}^i).
\end{split}
\end{equation*}
\end{lem}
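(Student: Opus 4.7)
The plan is to deduce this from equation (\ref{eq}) and Lemma \ref{epsilon} by reindexing the summation. Starting from
$$\zeta_q(0,(C+D\delta)\fb) = \sum_{i=1}^{\lambda m} \left[-B_1(x_{C+D\delta}^{i})B_1(x_{C+D\delta}^{i-1})+\frac{b_i}{2}B_2(x_{C+D\delta}^{i})\right],$$
I would first split the index $i \in \{1, \ldots, \lambda m\}$ as $i = mj + k$ with $j \in \{0, 1, \ldots, \lambda - 1\}$ and $k \in \{1, \ldots, m\}$, which rewrites the single sum as the iterated double sum appearing on the right-hand side of the statement. Applying Lemma \ref{epsilon} term-by-term then converts each $x_{C+D\delta}^{mj+k}$ into $x_{\epsilon^j \ast (C+D\delta)}^k$, and similarly for $y$, so that the $B_2$ summand is already in the desired form.

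The remaining task is to rewrite the factor $B_1(x_{C+D\delta}^{i-1})$ appearing in (\ref{eq}) as $B_1(y^i_{C+D\delta})$, since the statement involves $B_1(y^i)$ rather than $B_1(x^{i-1})$. For this I would invoke the Yamamoto recursion $y^{i+1} = 1 - x^{i}$ from (\ref{Yam}), giving $y^{i}_{C+D\delta} = 1 - x^{i-1}_{C+D\delta}$, and combine it with the identity $B_1(1-t) = -B_1(t)$ (which holds as an identity of polynomials) to conclude $B_1(x^{i-1}_{C+D\delta}) = -B_1(y^i_{C+D\delta})$; the extra sign folds into the $-B_1(x^i)B_1(y^i)$ in the statement.

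For the $B_2$ summand I would appeal to the $m$-periodicity of the partial quotients $b_i$. These are determined by the ratios $\delta_i = A_{i-1}/A_i$, and the identity $P_{\lambda m} = \iota(\epsilon^{-\lambda})$ established just before (\ref{xy}), together with the $\FZ$-linearity of the $P_i$-basis under multiplication by $\iota(\epsilon^{-1})$, forces $\delta_{i+m} = \delta_i$, hence $b_{mj+k} = b_k$. Therefore the coefficient $b_i$ in the inner $j$-sum of the desired expression is independent of $j$, exactly matching the statement.

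The main subtlety I anticipate is the handling of $B_1$ at boundary integer arguments, where $(x^i, y^i)$ lives in the half-open product $(0,1] \times [0,1)$ and the Shintani--Yamamoto convention for $B_1$ on integer values must be reconciled with the polynomial identity $B_1(1-t) = -B_1(t)$. This is however routine once one recalls that the $\langle \cdot \rangle$ convention fixed after (\ref{Yam}) is precisely set up to make these boundary evaluations consistent with those used in Lemma 2.5--6 of \cite{J-L}, so no genuinely new case analysis is required beyond what is already handled there.
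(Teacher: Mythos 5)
Your overall strategy is exactly the paper's: the paper offers no argument beyond ``From above lemma and equation (\ref{eq}), we obtain the following Lemma,'' i.e.\ precisely the reindexing $i=mj+k$ followed by Lemma \ref{epsilon}, and your treatment of the $m$-periodicity of the $b_i$ (via $P_{i+m}=\iota(\epsilon^{-1})P_i$) correctly fills in a step the paper leaves implicit.

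However, your handling of the passage from $B_1(x^{i-1})$ to $B_1(y^{i})$ contains a sign error that you assert away rather than resolve. You correctly note that $y^{i}=1-x^{i-1}$ and $B_1(1-t)=-B_1(t)$, hence $B_1(x^{i-1})=-B_1(y^{i})$; but substituting this into the summand of (\ref{eq}) gives
\[
-B_1(x^{i})\,B_1(x^{i-1})\;=\;+B_1(x^{i})\,B_1(y^{i}),
\]
whereas the lemma as displayed has $-B_1(x^{i})B_1(y^{i})$. The ``extra sign'' therefore does not fold in: your computation actually produces the statement with the opposite sign on the $B_1B_1$ term. This is in fact a latent inconsistency in the paper itself --- equation (\ref{eq}), Proposition \ref{orbit} and the coefficient formulas in Proposition \ref{summ} all consistently use $-B_1(x^{i})B_1(x^{i-1})$, while Lemma \ref{sum} (and the left-hand side of (\ref{divide})) swap in $y^{i}$ without adjusting the sign --- so the displayed form of the lemma carries a typo. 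A correct verification should either keep the summand as $-B_1(x^{i}_{\epsilon^{j}\ast(C+D\delta)})B_1(x^{i-1}_{\epsilon^{j}\ast(C+D\delta)})$ (the form actually used downstream) or flag that the stated $-B_1(x^{i})B_1(y^{i})$ must be read as $+B_1(x^{i})B_1(y^{i})$; as written, your proof claims to establish an identity that your own intermediate step contradicts. Your closing remark about boundary conventions is fine and not where the problem lies: $B_1$ is being evaluated as the polynomial $t-\tfrac12$ at genuine real numbers in $(0,1]$ and $[0,1)$, so $B_1(1-t)=-B_1(t)$ holds exactly and no case analysis is needed --- which is precisely why the sign discrepancy cannot be absorbed.
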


Let $F_{\delta}$ be a set defind in Eq.(\ref{Funda}).

 \begin{lem}\label{CD-orbit} If $\fb$ is an integral ideal such that $\fb^{-1}=[1,\delta]$ with $(\fb,q)=1$. Then for $C+D\delta\in F_{\delta},$ we find that 
$$Orb(C+D\delta)=\{\epsilon^j\ast(C+D\delta) \,\,|\,\,j=0,1,\cdots \lambda-1 \},$$
where $\epsilon$ is a totally positive fundamental unit of $E^+$ and $\lambda=[E^+:E^+_q]$
\end{lem}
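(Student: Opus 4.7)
The plan is to show that the action $\ast$ of $E^+$ on $F_\delta$ factors through the finite quotient $E^+/E_q^+$. Once this is established, the orbit of any $(C+D\delta)$ is the image of $\{\epsilon^j : 0 \le j < \lambda\}$ under the action, giving the desired equality.

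The key step is to verify that every $u \in E_q^+$ acts trivially. Given $u \in E_q^+$, by definition $u \equiv 1 \pmod{q}$ in $\fo_K$, so we may write $u = 1 + q\alpha$ with $\alpha \in \fo_K$. Since $\fb^{-1}$ is an $\fo_K$-module containing $C + D\delta$, we have $\alpha(C+D\delta) \in \fb^{-1}$, so
$$u(C+D\delta) = (C+D\delta) + q\alpha(C+D\delta) \equiv C + D\delta \pmod{q\fb^{-1}}.$$
Because $\{1, \delta\}$ is a $\FZ$-basis of $\fb^{-1}$, the set $\{C'+D'\delta : 0 \le C', D' \le q-1\}$ is a complete set of coset representatives of $\fb^{-1}/q\fb^{-1}$. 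Hence the unique representative of $u(C+D\delta)$ in this form must be $(C',D') = (C,D)$, giving $u \ast (C+D\delta) = C + D\delta$.

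Having shown the action descends to $E^+/E_q^+$, I would invoke the structure of units: $E^+$ is infinite cyclic generated by the totally positive fundamental unit $\epsilon$, and $E_q^+$ sits inside as the subgroup of index $\lambda$, so $E_q^+ = \langle \epsilon^\lambda \rangle$ and $E^+/E_q^+$ is cyclic of order $\lambda$ generated by the class of $\epsilon$. Therefore every orbit under $E^+$ is realized by the powers $\epsilon^0, \epsilon^1, \ldots, \epsilon^{\lambda-1}$, which is exactly the set asserted in the statement.

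No serious obstacle is anticipated; the argument is essentially a verification that reduction modulo $q\fb^{-1}$ is compatible with multiplication by elements of $E_q^+$, combined with the standard cyclic description of $E^+/E_q^+$. The one point that requires care is the uniqueness of the representative $(C',D')$ in the fundamental parallelogram $[0,q-1]\times[0,q-1]$, but this is precisely guaranteed by the definition of $F$ and the fact that $\{1,\delta\}$ is a $\FZ$-basis of $\fb^{-1}$.
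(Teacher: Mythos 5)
Your argument only establishes half of what the lemma is really asserting. What you prove is that $E_q^+$ acts trivially on $C+D\delta$, so the action factors through $E^+/E_q^+$ and the orbit is \emph{contained in} (indeed exhausted by) the list $\{\epsilon^j\ast(C+D\delta)\mid 0\le j\le \lambda-1\}$ as a set. That part is fine, and it is the routine direction. But notice that you never use the hypothesis $C+D\delta\in F_{\delta}$, i.e.\ that $((C+D\delta)\fb,q)=1$ --- that should be a warning sign. The paper's proof is the converse: it shows the stabilizer of $C+D\delta$ in $E^+$ is \emph{exactly} $E_q^+$. Concretely, $\epsilon^j\ast(C+D\delta)=C+D\delta$ iff $(\epsilon^j-1)(C+D\delta)\in q\fb^{-1}$, i.e.\ $(\epsilon^j-1)(C+D\delta)\fb\subseteq (q)$, and since the integral ideal $(C+D\delta)\fb$ is coprime to $q$ this forces $\epsilon^j\equiv 1\pmod q$, i.e.\ $\epsilon^j\in E_q^+$. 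Without this direction you have not ruled out that the stabilizer is strictly larger than $E_q^+$, in which case the $\lambda$ elements you list would not be pairwise distinct.

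This distinctness is not a cosmetic point: the lemma is used to convert the sum $\sum_{j=0}^{\lambda-1}$ over powers of $\epsilon$ in Lemma \ref{sum} into a sum over the set $Orb(C+D\delta)$ in Proposition \ref{orbit}, and later to assert that $\lambda(n)=[E^+:E_q^+]$ equals the cardinality of the orbit. Both of these require the map $j\mapsto \epsilon^j\ast(C+D\delta)$ to be injective on $\{0,\dots,\lambda-1\}$, i.e.\ the reverse inclusion of stabilizers. So you should add the coprimality argument above; your computation of the trivial action of $E_q^+$ can stay as the (easy) forward inclusion.
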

\begin{proof}
By  definition of  $\epsilon\ast$ in (\ref{action}) we find that 
$\epsilon^j \ast (C+D\delta)=C+D\delta$ if and only if 
\begin{equation}\label{bac}
 (\epsilon^j-1)(C+D\delta)\in q \fb^{-1}.
\end{equation} 
Since $(q, \fb(C+D\delta))=1$, (\ref{bac}) is equal to 
$$\epsilon^j\in E_q^+.$$

\end{proof}

Combining Lemma \ref{sum} and  Lemma \ref{CD-orbit},  we obtain 
\begin{prop}\label{orbit}
$$\zeta_q(0,(C+D\delta)\fb)=\sum_{(A,B)\in Orb(C+D\delta)}\sum_{i=1}^{m}-B_1(x_{A+B\delta}^{i})B_1(x_{A+B\delta}^{i-1})+\frac{b_i}{2}B_2(x_{A+B\delta}^{i}).$$
\end{prop}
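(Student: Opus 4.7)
The plan is essentially to assemble Lemma~\ref{sum} and Lemma~\ref{CD-orbit}, converting the inner parameter $j$ (which indexes powers of the fundamental unit $\epsilon$) into a sum over the orbit $Orb(C+D\delta)$, and then rewriting the $y$-coordinate in terms of the previous $x$-coordinate via Yamamoto's recursion.

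First, I would start from Lemma~\ref{sum}, which already expresses $\zeta_q(0,(C+D\delta)\fb)$ as a double sum over $i=1,\dots,m$ and $j=0,1,\dots,\lambda-1$, with summand built from $x^i_{\epsilon^j\ast(C+D\delta)}$ and $y^i_{\epsilon^j\ast(C+D\delta)}$. Next, I would invoke Lemma~\ref{CD-orbit}: since $\fb$ is coprime to $q$ and $C+D\delta\in F_{\delta}$, the stabilizer of $C+D\delta$ under the $E^+$-action is exactly $E_q^+$, so the map $j\mapsto \epsilon^j\ast(C+D\delta)$ for $j=0,1,\dots,\lambda-1$ is a bijection onto $Orb(C+D\delta)$. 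This lets me reindex the $j$-sum as a sum over $(A,B)\in Orb(C+D\delta)$ with $A+B\delta=\epsilon^j\ast(C+D\delta)$, and swap it to the outside.

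The remaining step is to replace $B_1(y^i_{A+B\delta})$ by a value involving $x^{i-1}_{A+B\delta}$, so that the summand matches the form used in equation~(\ref{eq}) and the statement of the proposition. For this I would use Yamamoto's recursion~(\ref{Yam}), which yields $y^i_{A+B\delta}=1-x^{i-1}_{A+B\delta}$, together with the elementary identity $B_1(1-x)=-B_1(x)$ for the first Bernoulli polynomial. Substituting this transforms the summand in Lemma~\ref{sum} into the form displayed in Proposition~\ref{orbit} (absorbing the sign into the overall minus sign), while the $B_2(x^i_{A+B\delta})$ piece is left untouched.

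I do not expect a genuine obstacle: all the substantive content lives in the previously established lemmas. The only points requiring care are (i) verifying that $j\mapsto\epsilon^j\ast(C+D\delta)$ is injective on $\{0,1,\dots,\lambda-1\}$ so that the reindexing is bona fide (which Lemma~\ref{CD-orbit} handles via the coprimality $((C+D\delta)\fb,q)=1$), and (ii) keeping the sign conventions straight when translating between the $y$-variables and $x$-variables via Yamamoto's identity. Once these bookkeeping points are in place, the identity falls out immediately.
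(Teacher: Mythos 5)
Your route is exactly the paper's route: the paper obtains this proposition by precisely the combination you describe, namely Lemma~\ref{sum} for the double sum over $i$ and $j$ together with Lemma~\ref{CD-orbit} to identify $j\mapsto\epsilon^{j}\ast(C+D\delta)$, $j=0,1,\dots,\lambda-1$, bijectively with $Orb(C+D\delta)$, so the reindexing step is fine. The one place your write-up slips is the final conversion from $y^{i}$ to $x^{i-1}$: Yamamoto's recursion gives $y^{i}_{A+B\delta}=1-x^{i-1}_{A+B\delta}$ and $B_1(1-x)=-B_1(x)$, hence $-B_1(x^{i}_{A+B\delta})B_1(y^{i}_{A+B\delta})=+B_1(x^{i}_{A+B\delta})B_1(x^{i-1}_{A+B\delta})$; that extra minus sign cannot be ``absorbed into the overall minus sign'' of the summand, and a literal application of your substitution yields the first term with the opposite sign from the one displayed in Proposition~\ref{orbit}. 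To be fair, this mismatch is already latent in the paper: Eq.~(\ref{eq}) and the proposition carry $-B_1(x^{i})B_1(x^{i-1})$ while Lemma~\ref{sum} carries $-B_1(x^{i})B_1(y^{i})$, and these two normalizations are incompatible under $y^{i}=1-x^{i-1}$. The clean fix is to bypass the $y$-variables entirely: apply Lemma~\ref{epsilon} and Lemma~\ref{CD-orbit} directly to Eq.~(\ref{eq}), which is already written in terms of $x^{i}$ and $x^{i-1}$, splitting $\sum_{i=1}^{\lambda m}$ into $\lambda$ blocks of length $m$ indexed by the orbit. With that adjustment your argument is complete and coincides with the paper's.
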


\section{Periodicity of orbit}

In this section, we adapt the discussion of the previous section to  a family of real quadratic fields. With the assumptions of this paper, in the considered family, 
the Shintani-Zagier cone decomposition 
varies. This variation of decomposition  is far from being periodic in $q$ but the fundamental unit acts on $F_\delta$ with  period $q$. 

We restate the assumption for the family $(K_n, \fb_n)$. 
Let $\{K_n=\FQ(\sqrt{f(n)})\}_{n\in \FN}$ be a family of real quadratic fields where $f(n)$ is a positive square free integer for each $n$. Suppose $\fb_n$ is an integral ideal relatively prime to $q$ such
that $\fb_n^{-1} = [1,\delta(n)]$ for a $\delta(n)$ satisfying
$\delta(n) > 2$, $0<\delta(n)'<1$. 
Assume the continued fraction expansion of  $\delta(n)-1$ is 
$$
\delta(n)-1=[[a_0(n),\cdots,a_{s-1}(n)]].
$$
Then it is known  that the minus continued fraction expansion of $\delta(n)$ is 
%
$$
\delta(n)=((b_0(n),b_1(n),\cdots,b_{m(n)-1}(n)))
$$ 
where 
$$b_i(n)=\begin{cases}a_{2j}(n)+2 & \text{ for } i=S_j(n)\\2 &\text{ otherwise}\end{cases}$$
and 
for some index $S_j(n)$ depending on $n$. 
$S_j(n)$ is defined from  $a_i(n)$ as follows:
$$S_j(n)=\begin{cases}0 & \text{ for } j=0\\S_{j-1}(n)+a_{2j-1}(n) &\text{ for } j\geq1\end{cases}$$
It follows that the period $m(n)$  of the minus continued fraction of $\delta(n)$ is 
\begin{equation}\label{period}
m(n)=\begin{cases}S_{\frac{s}{2}}(n) & \text{ for even } s\\S_{s}(n) &\text{ for odd } s\end{cases}
\end{equation}
(cf. page 177-178 in \cite{Zagier}, Lemma 3.1 in \cite{J-L}).

Setting $\mu(s)= 1$ (resp. $2$) for even $s$(resp. odd $s$),
we have 
$$m(n)=S_{\mu(s)s}(n).$$

Let $\epsilon_n>1$ be the totally positive fundamental unit of $K_n$ and 
$\epsilon_n^{\lambda(n)}>1$ be the generator of $E_q(K_n)^+$. 
Then $\lambda(n)=[E^+(K_n):E^+_q(K_n)]$ equals the cardinality of the orbit of $(C,D)\in F_{\delta(n)}$ as we have discussed in the previous section.

For  each $(C,D)\in F_{\delta(n)}$ pair, we have the sequence $\{x_{C+D\delta(n)}^i,y_{C+D\delta(n)}^i\}_{i\geq-1}$ defined as in Eq.(\ref{xy}) of  the previous section: 
\begin{eqnarray}
x_ {C+D\delta(n)}^0=& \left<\frac{D}{q}\right>,\\
y_ {C+D\delta(n)}^{0}=&\frac{C}{q},\\
%
x_ {C+D\delta(n)}^{i+1}=&\left<b_i(n)x_ {C+D\delta(n)}^i-x_{C+D\delta(n)}^{i-1} \right>,\\
y_ {C+D\delta(n)}^{i+1}=&1-x_ {C+D\delta(n)}^{i}.
\end{eqnarray}


For  $i\geq 0$, we define
$0\leq C_i(n), D_i(n) \leq q-1$ as follows
$$\epsilon_n^{i}\ast(C+D\delta(n))=C_i(n)+D_i(n),$$
with $C_0(n)=C, D_0(n)=D.$

Using Lemma \ref{epsilon} , we have
$$x_{C+D\delta(n)}^{m(n)i}=x_{\epsilon_n^i\ast(C+D\delta(n))}^0=x_{C_i(n)+D_i(n)\delta(n)}^0=\left<\frac{D_i(n)}{q}\right>$$
and
$$y_{C+D\delta(n)}^{m(n)i}=1-x_{C+D\delta(n)}^{m(n)i-1}=y_{\epsilon_n^i\ast(C+D\delta(n))}^0=y_{C_i(n)+D_i(n)\delta(n)}^0=\frac{C_i(n)}{q}.$$

\begin{lem}\label{mul}
For $i\geq1$,
$$m(n)i=S_{\mu(s)si}(n).$$
\end{lem}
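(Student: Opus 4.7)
The plan is to proceed by induction on $i$, leveraging the fact that the purely periodic continued fraction expansion of $\delta(n)-1$ forces an $s$-periodicity (in the index $j$) of the coefficient sequence $a_j(n)$, i.e.\ $a_{j+s}(n) = a_j(n)$ for every $j\ge 0$. The base case $i=1$ is immediate: it is the defining formula $m(n) = S_{\mu(s)s}(n)$ recorded just before the lemma.

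For the inductive step I would assume $m(n) i = S_{\mu(s) s i}(n)$ and reduce the target equality to the single identity
\[
S_{\mu(s) s (i+1)}(n) - S_{\mu(s) s i}(n) = m(n).
\]
Using the telescoping definition $S_j(n) = \sum_{k=1}^{j} a_{2k-1}(n)$, the left-hand side unfolds as
\[
\sum_{k = \mu(s) s i + 1}^{\mu(s) s (i+1)} a_{2k-1}(n) \;=\; \sum_{k' = 1}^{\mu(s) s} a_{2k' - 1 + 2 \mu(s) s i}(n),
\]
after the substitution $k = k' + \mu(s) s i$. Since $\mu(s)\in\{1,2\}$, the shift $2\mu(s) s i$ in the subscript is an integer multiple of $s$, so $s$-periodicity makes the shift invisible to $a_{\bullet}(n)$, and the last sum collapses to $\sum_{k'=1}^{\mu(s) s} a_{2k'-1}(n) = S_{\mu(s) s}(n) = m(n)$, closing the induction. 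Equivalently, one could present the argument non-inductively by partitioning $S_{\mu(s) s i}(n)$ into $i$ consecutive blocks of length $\mu(s) s$ and noting that every block contributes $m(n)$ by the same shift-by-multiple-of-$s$ observation.

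The argument is essentially formal and I do not anticipate any substantive mathematical obstacle. The one point demanding care is the verification that $2\mu(s) s i$ is a multiple of $s$ for every $i$, which is what licenses the appeal to $s$-periodicity at the crucial step. This divisibility is automatic from the integrality of $\mu(s)$, but it is really the pivot of the proof, and it is why the parity of $s$ (absorbed into the definition of $\mu(s)$) enters the statement in the form it does.
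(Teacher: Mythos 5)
Your argument is correct and is essentially the paper's own proof: the paper writes $S_{\mu(s)si}(n)$ directly as $i$ blocks of length $\mu(s)s$ and kills the index shift $2(j-1)\mu(s)s$ by the $s$-periodicity of the $a_j(n)$, which is exactly the non-inductive variant you mention at the end. The inductive packaging changes nothing of substance.
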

\begin{proof}
We can rewrite $S_{\mu(s)si}(n)$ as following:
$$ \sum_{j=1}^{i}\sum_{k=1}^{s\mu(s)}a_{2(j-1)\mu(s)s+2k-1}(n)= \sum_{j=1}^{i}\sum_{k=1}^{s\mu(s)}a_{2k-1}(n)= i S_{\mu(s)s}(n)=i m(n).$$
\end{proof}

Finally we obtain
$$\left<\frac{D_i(n)}{q}\right>=x_{C+D\delta(n)}^{S_{\mu(s)si}(n)}$$
and 
$$\frac{C_i(n)}{q}= 1-x_{C+D\delta(n)}^{S_{\mu(s)si}(n)-1}.$$

Note that for $j>0$, 
$x_{C+D\delta(n)}^{S_{j}(n)}(n)$ and
$ x_{C+D\delta(n)}^{S_{j}(n)-1}$ are determined only by $r$ for $n=qk+r$(See Property 3 in p.16 and Proposition 3.5 of \cite{J-L}).
Thus we have $q$-invariant property of $C_i(n), D_i(n)$  in $n=qk+r$:
\begin{prop}\label{CD}
For $n,n'$ such that $n'=n+qk$, assume $(C,D)\in F_{\delta(n)}\cap F_{\delta(n')}$.
Then $(C_i(n),D_i(n)) = (C_i(n'),D_i(n') )\in F_{\delta(n)}\cap F_{\delta(n')}$.
\end{prop}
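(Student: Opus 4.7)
The plan is to recover $(C_i(n), D_i(n))$ from the sequence $x^\bullet_{C+D\delta(n)}$ via the identifications
\[
\left\langle \tfrac{D_i(n)}{q}\right\rangle = x^{S_{\mu(s)si}(n)}_{C+D\delta(n)}, \qquad \tfrac{C_i(n)}{q} = 1 - x^{S_{\mu(s)si}(n)-1}_{C+D\delta(n)},
\]
established just above the statement via Lemma~\ref{mul}, and to write the analogous identifications for $n'$. For $i=0$ the pair $(C_0(n),D_0(n))=(C,D)$ is tautologically independent of $n$, so I may focus on $i \ge 1$.

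The essential input is the cited claim from \cite{J-L} (Property~3 on p.~16 and Proposition~3.5): for every $j > 0$ the quantities $x^{S_j(n)}_{C+D\delta(n)}$ and $x^{S_j(n)-1}_{C+D\delta(n)}$ depend on $n$ only through $r = n \bmod q$. Applying this with $j = \mu(s)si$ to both $n$ and $n' \equiv n \pmod q$ yields $\langle D_i(n)/q\rangle = \langle D_i(n')/q\rangle$ and $C_i(n)/q = C_i(n')/q$. Since $0 \le C_i,D_i \le q-1$ the data on the right recovers each integer uniquely: the value $C_i/q$ already lies in $[0,1)$ so the fraction determines $C_i$, and for $D_i$ the only subtlety is that the convention $\langle 0\rangle = 1$ forces us to treat $D_i=0$ separately, but this case is isolated and does not collide with $D_i\in\{1,\ldots,q-1\}$. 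We conclude $(C_i(n),D_i(n)) = (C_i(n'),D_i(n'))$.

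To see that this common pair lies in $F_{\delta(n)} \cap F_{\delta(n')}$, I would invoke the remark made just after the definition~(\ref{action}) of $\ast$ that the $E^+$-action preserves $F_\delta$. Starting from the hypothesis $(C,D) \in F_{\delta(n)}$, iterating $\epsilon_n\ast$ keeps us inside $F_{\delta(n)}$, giving $(C_i(n),D_i(n)) = \epsilon_n^i \ast (C+D\delta(n)) \in F_{\delta(n)}$; the same argument with $n$ replaced by $n'$ places the (now-identical) pair in $F_{\delta(n')}$. The only nontrivial ingredient in the whole argument is the $r$-periodicity of $x^{S_j(n)}_{C+D\delta(n)}$ imported from \cite{J-L}; given that, the proposition is a direct unwinding of the definitions, so the main ``obstacle'' is really just verifying that the cited property of the cutoffs $S_j(n)$ applies uniformly to the index $\mu(s)si$ for all $i\ge 1$, which follows from Lemma~\ref{mul}.
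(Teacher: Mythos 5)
Your argument is correct and is essentially the paper's own: the authors derive the proposition from exactly the identities $\langle D_i(n)/q\rangle = x^{S_{\mu(s)si}(n)}_{C+D\delta(n)}$ and $C_i(n)/q = 1 - x^{S_{\mu(s)si}(n)-1}_{C+D\delta(n)}$ (via Lemma~\ref{mul}) together with the cited $r$-only dependence of $x^{S_j(n)}_{C+D\delta(n)}$ and $x^{S_j(n)-1}_{C+D\delta(n)}$ from \cite{J-L}. Your additional care about the $\langle 0\rangle = 1$ convention and the explicit check that the common pair lies in $F_{\delta(n)}\cap F_{\delta(n')}$ via the $E^+$-action are fine and only make explicit what the paper leaves implicit.
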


Consequently, we see
$\lambda(n) = \lambda(n')$ and 
$E^+(n)/E^+_q(n) \simeq E^+(n')/E^+_q(n')$  for $n'= n+qk$. Furthermore,
from the original assumption that $N((C+D\delta(n))\pmod{q}$ is determined by
$r$ the residue of $n$, we have
$F_{\delta(n)} = F_{\delta(n')}$.
This  identification preserves the $E^+(n)/E^+_q(n)$-action. 





\begin{lem}
If $N((C+D\delta(n))\fb_n) \pmod{q}$ is determined by $r$ the residue of $n$ mod $q$ 
for $n=qk+r$ 
then $F_{\delta(n)}$ is invariant as $k$ varies.
\end{lem}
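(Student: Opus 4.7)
The plan is to unwind the definition of $F_{\delta(n)}$ and reduce the coprimality of an ideal with the rational integer $q$ to the coprimality of its norm with $q$, at which point the hypothesis applies directly.

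First I would recall that by (\ref{Funda}), $(C,D)\in F_{\delta(n)}$ iff $0\le C,D\le q-1$ and the integral ideal $(C+D\delta(n))\fb_n$ is coprime to $q$. The range condition on $C,D$ clearly does not depend on $n$, so only the coprimality condition can vary with $n$. Next I would invoke the standard fact that for an integral ideal $\fc$ in a number field and a positive rational integer $q$, one has $(\fc,q)=1$ if and only if $\gcd(N(\fc),q)=1$: any rational prime $p\mid q$ splits into prime ideals above it, and $\fc$ meets such a $\fp$ exactly when $p\mid N(\fp)\mid N(\fc)$.

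Applying this with $\fc=(C+D\delta(n))\fb_n$, membership in $F_{\delta(n)}$ is equivalent to
$$\gcd\bigl(N_{K_n}((C+D\delta(n))\fb_n),\,q\bigr)=1.$$
But the value $N_{K_n}((C+D\delta(n))\fb_n)\pmod{q}$ is, by the hypothesis of the lemma, a function only of $C$, $D$ and the residue $r$ of $n$ modulo $q$. Hence the gcd with $q$ likewise depends only on $C$, $D$ and $r$, and the criterion defining $F_{\delta(n)}$ is invariant under replacing $n$ by $n+qk$.

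I would conclude that for any $n,n'$ with $n\equiv n'\pmod q$ we have $F_{\delta(n)}=F_{\delta(n')}$, which is exactly the stated invariance. The only substantive step is the translation from ideal coprimality to norm coprimality; once that is in place the statement reduces to a direct application of the hypothesis, so I do not expect a genuine obstacle—this lemma is essentially a clean bookkeeping consequence of the assumption that was already exploited implicitly in Proposition~\ref{CD}.
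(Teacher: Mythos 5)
Your proposal is correct and follows essentially the same route as the paper: the paper's proof likewise reduces the condition $((C+D\delta(n))\fb_n,q)=1$ to $\gcd(N((C+D\delta(n))\fb_n),q)=1$ and then invokes the hypothesis that this norm modulo $q$ depends only on $C$, $D$, and $r$. You merely spell out the standard norm-coprimality equivalence in slightly more detail than the paper does.
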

\begin{proof}
Use the fact $((C+D\delta(n))\fb_n,q)=1$ iff $(N((C+D\delta(n)\fb_n),q)=1$.
This is again equivalent to 
$(C,D)\in F_{\delta(n)}$. 
From the $q$-invariance of 
$(N((C+D\delta(n))\fb_n),q)=1$ in $n$, 
we obtain that $(C,D)\in F_{\delta(n)}$ iff $(C,D)\in F_{\delta(n')}$ for $n'=n+qk$.
\end{proof}




In particular, we obtain the $q$-invariance in $n$ of an orbit in $F_{\delta(n)}$ as in the 
following:
\begin{prop}\label{period-orbit}If $N((C+D\delta(n))\fb_n) \pmod{q}$ is a function  depending only 
on $C,D$ and $r$ for $n=qk+r$ then
$Orb(C+D\delta(n))$ is invariant as $k$ varies.
\end{prop}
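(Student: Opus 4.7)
The plan is to assemble this statement directly from the structural results already established in this section, treating it essentially as a corollary of Proposition \ref{CD} together with the preceding Lemma identifying $F_{\delta(n)}$ across $k$. By Lemma \ref{CD-orbit}, writing $n' = n+qk$ and using that $\epsilon_n, \epsilon_{n'}$ are the totally positive fundamental units,
\[
Orb(C+D\delta(n)) = \{(C_j(n),D_j(n)) \mid j=0,1,\ldots,\lambda(n)-1\},
\]
and similarly for $n'$ with $\lambda(n')$. So the statement reduces to checking that (i) the set $F_{\delta(n)}$ where the orbit lives is the same for $n$ and $n'$, (ii) the indexing range agrees, i.e.\ $\lambda(n)=\lambda(n')$, and (iii) the listed pairs coincide term by term.

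First I would invoke the preceding Lemma: the hypothesis on $N((C+D\delta(n))\fb_n)\pmod q$ gives $F_{\delta(n)} = F_{\delta(n')}$, so starting points $(C,D)\in F_{\delta(n)}$ are simultaneously in $F_{\delta(n')}$ and the orbit computation makes sense on both sides. Next, for any $(C,D)\in F_{\delta(n)}$ and each $i\geq 0$, Proposition \ref{CD} gives the equality
\[
(C_i(n),D_i(n)) = (C_i(n'),D_i(n'))
\]
as elements of $F_{\delta(n)}\cap F_{\delta(n')}$, which is exactly the $q$-invariance in $n$ of the individual orbit elements.

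It remains to see that $\lambda(n)=\lambda(n')$ so that the orbits have the same length. Here I would argue as follows: $\lambda(n)$ is the smallest positive $j$ such that $(C_j(n),D_j(n))=(C,D)$ for every $(C,D)\in F_{\delta(n)}$ (equivalently, by Lemma \ref{CD-orbit}, the smallest $j$ with $\epsilon_n^j\in E_q^+$). By Proposition \ref{CD}, this smallest $j$ is computed from the sequence $(C_i,D_i)$, which is the same sequence for $n$ and $n'$; hence $\lambda(n)=\lambda(n')$, as already noted in the running commentary after Proposition \ref{CD}. Combining (i)--(iii) gives the equality of orbits $Orb(C+D\delta(n))=Orb(C+D\delta(n'))$ as subsets of $F$, completing the proof.

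The main obstacle, such as it is, has already been dealt with upstream: it is Proposition \ref{CD}, whose proof uses Yamamoto's recursion through the minus continued fraction of $\delta(n)$ and the fact that $x^{S_j(n)}_{C+D\delta(n)}$ and $x^{S_j(n)-1}_{C+D\delta(n)}$ depend only on the residue $r$. Given that input, Proposition \ref{period-orbit} is a short bookkeeping argument, and the only genuine subtlety is making sure that the hypothesis is strong enough to equate the ambient sets $F_{\delta(n)}$ and to trivialize the dependence of $\lambda$ on $k$.
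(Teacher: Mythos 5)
Your proposal is correct and matches the paper's own (largely implicit) argument: the paper derives Proposition \ref{period-orbit} exactly by combining Proposition \ref{CD} (termwise invariance of $(C_i(n),D_i(n))$), the preceding lemma giving $F_{\delta(n)}=F_{\delta(n')}$, and the resulting observation $\lambda(n)=\lambda(n')$ via Lemma \ref{CD-orbit}. No substantive difference from the paper's route.
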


\section{Explicit computation of the coefficients}
Recall some calculations from Section 3.2 in \cite{J-L}.
For this we define a sequence $\{\nu_{CD}^i(r)\}_{i\geq -1}$ for $0\leq r\leq q-1$. As we have constrained that $a_i(x)\in \FZ[x]$,
 $\langle a_i(n)\rangle_q$ is independent of $k$ for $n=qk+r$ but determined only by $r$.  
Thus we can define
$$\gamma_{i}(r):=\langle a_i(n)\rangle_q= \langle a_i(r) \rangle_q.$$

Let
$$\Gamma_j(r):=
\begin{cases}\Gamma_{j-1}(r)+\gamma_{2j-1}(r) &\text{for $ j\geq 1$}\\ 0 & \text{for $ j=0$}\end{cases}$$
For $i\geq 1$,
$$
c_i(r) = \begin{cases}
\gamma_{2j}(r) + 2  & \text{for $i=\Gamma_j(r)$} \\
2 & \text{otherwise}
\end{cases}
$$

Now we can define a sequence $\{\nu_{CD}^i(r)\}_{i\geq -1}$ satisfying
\begin{equation*}
\nu_{CD}^{-1}(r)=\frac{q-C}{q},\quad \nu_{CD}^{0}(r)= \langle \frac{D}{q}\rangle
\end{equation*}
and
$$\nu_{CD}^{i+1}(r)=\langle c_i(r)\nu_{CD}^{i}(r)-\nu_{CD}^{i-1}(r)\rangle. $$

When  $C,D$ are fixed and  clear from the context,  we denote $x_{C+D\delta(n)}^{i}$ and $\nu_{CD}^{i}(r)$ by $x_i(n)$ and  $\nu_i(r)$,  respectively.

\begin{prop}\label{pe}For $ j\geq 0$ and integer $n$ such that $a_{2j+1}(n)\geq q$,
$\{x_i(n)\}_{S_j(n)\leq i\leq S_{j+1}(n)}$ has period $q$. Explicitly we have
$$ x_{S_j(n)+q+i}(n)=x_{S_j(n)+i}(n)\,\,\text{for}\,\,
0\leq i \leq a_{2j+1}(n)-q.$$
\end{prop}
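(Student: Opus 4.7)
The plan is to exploit the fact that the minus partial quotients $b_i(n)$ equal $2$ strictly between the consecutive indices $S_j(n)$ and $S_{j+1}(n) = S_j(n) + a_{2j+1}(n)$, which forces the sequence $\{x_i(n)\}$ to behave as an arithmetic progression modulo $1$ across the block. Since the common difference will lie in $\tfrac{1}{q}\FZ/\FZ$, it is annihilated by $q$ iterations, producing the required periodicity.

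First, combining the two lines of the Yamamoto recursion \eqref{Yam} yields
$$x_{i+1}(n) \equiv b_i(n)\, x_i(n) - x_{i-1}(n) \pmod 1,$$
and for $S_j(n) < i < S_{j+1}(n)$ we have $b_i(n) = 2$ by the definition of the minus continued fraction recalled in Section~4. Setting $u := x_{S_j(n)}(n)$ and $d := x_{S_j(n)+1}(n) - u$, an induction on $k$ whose inductive step is precisely the $b=2$ instance of the recursion gives
$$x_{S_j(n) + k}(n) \equiv u + k d \pmod 1 \qquad (0 \leq k \leq a_{2j+1}(n)).$$

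Next, I would argue $d \in \tfrac{1}{q}\FZ$. By the geometric definition \eqref{xy}, $x^i_{C+D\delta(n)}$ is the $P_{i-1}$-coordinate of a lattice representative of the coset $\tfrac{C+D\delta(n)}{q} + \fb_n^{-1}$ with respect to the $\FZ$-basis $\{P_{i-1}, P_i\}$ of $\iota(\fb_n^{-1})$. Since this coset represents an element of $\tfrac{1}{q}\fb_n^{-1}/\fb_n^{-1}$, its coordinates in any integral basis lie in $\tfrac{1}{q}\FZ/\FZ$. Hence both $u$ and $x_{S_j(n)+1}(n)$ lie in $\tfrac{1}{q}\FZ$, so $d \in \tfrac{1}{q}\FZ/\FZ$ and $qd \equiv 0 \pmod 1$.

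Consequently, for $0 \leq i \leq a_{2j+1}(n) - q$ (a range which is non-empty precisely thanks to the hypothesis $a_{2j+1}(n) \geq q$),
$$x_{S_j(n) + q + i}(n) \equiv u + (q+i)d \equiv u + id \equiv x_{S_j(n) + i}(n) \pmod 1,$$
and as both sides lie in $(0,1]$ they must be equal. The argument is essentially immediate once the two observations above are in hand; the only point requiring care is the clean derivation of $\tfrac{1}{q}\FZ$-integrality from the geometric definition, which in turn is what pins the period to exactly $q$ rather than some unrelated value.
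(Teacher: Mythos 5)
Your proof is correct and is in substance the argument the paper outsources to \cite{J-L} (Proposition 3.4 there): the block $\{x_i(n)\}_{S_j(n)\le i\le S_{j+1}(n)}$ is an arithmetic progression modulo $\FZ$ because $b_i(n)=2$ off the indices $S_j(n)$ (the very fact this paper recalls from Proposition 3.2 of \cite{J-L}), and the common difference lies in $\tfrac1q\FZ$ since all $x^i_{C+D\delta(n)}$ do, which forces period $q$. Your care in noting that both sides lie in $(0,1]$ before upgrading the congruence mod $1$ to an equality is exactly the right finishing touch.
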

\begin{proof}
See Proposition 3.4 in \cite{J-L}
\end{proof}

\begin{prop}\label{inv}
For integers $j\geq 0$, if $n=qk+r$ then
$$x_{S_j(n)+i}(n)=\nu_{\Gamma_j(r)+i}(r)\,\,\text{for}\,\,0\leq i\leq \gamma_{2j+1}(r).$$
\end{prop}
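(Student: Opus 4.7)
My plan is induction on $j$, exploiting the fact that both sequences $\{x_i(n)\}$ and $\{\nu_i(r)\}$ satisfy three-term recursions of the same form. Substituting $y_{i+1} = 1 - x_i$ into Yamamoto's recursion (\ref{Yam}) yields $x_{i+1}(n) = \langle b_i(n)\, x_i(n) - x_{i-1}(n)\rangle$, which is formally identical to the defining relation $\nu_{i+1}(r) = \langle c_i(r)\, \nu_i(r) - \nu_{i-1}(r)\rangle$. Both sequences take values in $\frac{1}{q}\FZ \cap [0,1)$, so $x_{i+1}(n)$ depends on $b_i(n)$ only through its residue mod $q$. Because each $a_i(x)\in\FZ[x]$ and $n\equiv r \pmod{q}$, the ``special'' coefficient $b_{S_j(n)}(n) = a_{2j}(n)+2$ is congruent mod $q$ to $c_{\Gamma_j(r)}(r) = \gamma_{2j}(r)+2$, while all other coefficients equal $2$ on both sides. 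Consequently, once a matching pair of initial values is supplied at the start of a block, the two recursions agree throughout that block.

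For the base case $j=0$ we have $S_0(n)=\Gamma_0(r)=0$, and the initial values match directly: $x_{-1}(n) = (q-C)/q = \nu_{-1}(r)$ (using $y_0 = 1 - x_{-1}$) and $x_0(n) = \langle D/q\rangle = \nu_0(r)$. Iterating the recursion then gives the claim for $0 \le i \le \gamma_1(r)$. For the inductive step, the real difficulty is that $a_{2j+1}(n)$ and $\gamma_{2j+1}(r)$ can differ by a large multiple of $q$, so many more recursive steps occur inside the $j$-th block on the $x$-side than on the $\nu$-side. Here Proposition \ref{pe} is exactly what is needed: iterating its periodicity conclusion $\lfloor a_{2j+1}(n)/q\rfloor$ times yields
$$
x_{S_{j+1}(n)}(n) = x_{S_j(n)+\gamma_{2j+1}(r)}(n), \qquad x_{S_{j+1}(n)-1}(n) = x_{S_j(n)+\gamma_{2j+1}(r)-1}(n),
$$
and the inductive hypothesis (applied at $i=\gamma_{2j+1}(r)$ and $i=\gamma_{2j+1}(r)-1$) identifies these with $\nu_{\Gamma_{j+1}(r)}(r)$ and $\nu_{\Gamma_{j+1}(r)-1}(r)$ respectively. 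With this matched pair of starting values at the head of block $j+1$, the base-case argument propagates the equality up to $i = \gamma_{2j+3}(r)$.

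I expect the main obstacle to be the bookkeeping around the periodicity step: one must check that every iteration of Proposition \ref{pe} respects its range constraint $0 \le i \le a_{2j+1}(n) - q$, and handle degenerate situations where $\gamma_{2j+1}(r)=0$ (so the $j$-th block contributes only at its endpoints and one must back up to $\gamma_{2j-1}(r)$ for the predecessor value) or where $a_{2j+1}(n) < q$ (so $\gamma_{2j+1}(r) = a_{2j+1}(n)$ and no periodic jump is needed). These cases are routine once one has the general induction set up, but they do require explicit verification.
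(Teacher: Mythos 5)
Your argument is correct, and it closes the statement. Note that the paper itself gives no internal proof of Proposition \ref{inv} --- it simply cites Proposition 3.5 of \cite{J-L} --- so there is nothing in this text to compare against; your induction on $j$ is a sound self-contained reconstruction of what that citation encapsulates. The two load-bearing observations are exactly right: since every term of both sequences lies in $\frac{1}{q}\FZ$, the step map $\langle b\,x - x'\rangle$ depends on the coefficient $b$ only through $b \bmod q$, so $b_{S_j(n)}(n)=a_{2j}(n)+2\equiv \gamma_{2j}(r)+2=c_{\Gamma_j(r)}(r) \pmod q$ suffices; and Proposition \ref{pe}, applied $(a_{2j+1}(n)-\gamma_{2j+1}(r))/q$ times (a nonnegative integer, by Lemma \ref{degree}), collapses the surplus length of the $j$-th block so that the two consecutive seed values $x_{S_{j+1}(n)}(n)$ and $x_{S_{j+1}(n)-1}(n)$ agree with $\nu_{\Gamma_{j+1}(r)}(r)$ and $\nu_{\Gamma_{j+1}(r)-1}(r)$, which is all the three-term recursion needs to propagate through block $j+1$. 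Two small corrections to your bookkeeping: the paper's $\langle\cdot\rangle$ takes the value $1$ (not $0$) at integers, so the sequences live in $\bigl(0,1\bigr]\cap\frac{1}{q}\FZ$ rather than $[0,1)$ --- harmless, since the argument only uses membership in $\frac{1}{q}\FZ$; and the degenerate case $\gamma_{2j+1}(r)=0$ that you flag cannot occur, because the paper normalizes $\gamma_i(r)\in[1,q]$, so every block has length at least one and the predecessor index $i=\gamma_{2j+1}(r)-1\geq 0$ is always within the range covered by the inductive hypothesis. The case $a_{2j+1}(n)<q$ indeed forces $\gamma_{2j+1}(r)=a_{2j+1}(n)$ and requires no periodicity jump, as you note.
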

\begin{proof}
See Proposition 3.5 in \cite{J-L}
\end{proof}

For $0\leq i\leq s-1$, let $a_i(x)=\sum_{j=0}^{d}\alpha_{ij}x^j\in \FZ[x]$. Then for $0\leq r\leq q-1$, there are an unique $\gamma_i(r)\in[1,q]\cap\FZ$ such that  $a_i(r)=q\tau_i(r)+\gamma_i(r)$ for
$\tau_i(r)\in\FZ$.
\begin{lem}\label{degree}
If $n=qk+r$, then 
we have
$$a_i(n)=q\sum_{m=1}^{d}A_{im}(r)k^m+q\tau_i(r)+\gamma_i(r),$$
where $A_{im}(r)=\sum_{j=m}^{d}\alpha_{ij} \begin{pmatrix}
     j    \\
     m
\end{pmatrix} q^{m-1} r^{j-m}.$
\end{lem}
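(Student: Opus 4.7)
The plan is to prove this lemma by a direct application of the binomial theorem to $(qk+r)^j$, followed by a reindexing of the double sum. Since $a_i(x)\in \FZ[x]$ is a polynomial of degree at most $d$, this is essentially a bookkeeping exercise rather than a structural argument.

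First I would substitute $n=qk+r$ into $a_i(x)=\sum_{j=0}^{d}\alpha_{ij}x^j$ and expand each power using
\[
(qk+r)^{j}=\sum_{m=0}^{j}\binom{j}{m}(qk)^{m}r^{j-m}=\sum_{m=0}^{j}\binom{j}{m}q^{m}k^{m}r^{j-m}.
\]
Plugging this into $a_i(qk+r)$ gives a double sum over $j$ and $m$; swapping the order of summation so that $m$ becomes the outer index yields
\[
a_i(qk+r)=\sum_{m=0}^{d}k^{m}q^{m}\sum_{j=m}^{d}\alpha_{ij}\binom{j}{m}r^{j-m}.
\]

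Next I would isolate the $m=0$ contribution, which collapses to $\sum_{j=0}^{d}\alpha_{ij}r^{j}=a_i(r)$, and factor a single $q$ out of every remaining term ($m\geq 1$), leaving $q^{m-1}$ inside. The inner sums for $m\geq 1$ are precisely the coefficients $A_{im}(r)=\sum_{j=m}^{d}\alpha_{ij}\binom{j}{m}q^{m-1}r^{j-m}$ as defined in the statement. Therefore
\[
a_i(n)=a_i(r)+q\sum_{m=1}^{d}A_{im}(r)k^{m}.
\]

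Finally, I would invoke the hypothesis that $a_i(r)=q\tau_i(r)+\gamma_i(r)$ with $\gamma_i(r)\in[1,q]\cap\FZ$ (the unique such decomposition is given just before the lemma) to rewrite the constant-in-$k$ term and conclude
\[
a_i(n)=q\sum_{m=1}^{d}A_{im}(r)k^{m}+q\tau_i(r)+\gamma_i(r),
\]
which is exactly the claimed identity. There is no genuine obstacle here; the only point requiring mild care is the reindexing step and tracking the single factor of $q$ that is pulled out of $q^m$ to match the precise form of $A_{im}(r)$. Everything else is mechanical expansion.
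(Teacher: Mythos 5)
Your proof is correct and is exactly the computation the lemma calls for: the paper itself omits the proof as an immediate consequence of the binomial expansion of $(qk+r)^j$, the interchange of summation, and the division $a_i(r)=q\tau_i(r)+\gamma_i(r)$ fixed just before the statement. The bookkeeping of the factor $q^{m}=q\cdot q^{m-1}$ and the identification of the $m=0$ term with $a_i(r)$ are handled properly, so nothing is missing.
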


We recall that $\{x_{C+D\delta(n)}^i\}_{S_j(n)\leq i \leq S_{j+1}(n)}$ is and arithmetic progression mod $\FZ$ with difference $\langle x_{C+D\delta(n)}^{S_j(n)+1}-x_{C+D\delta(n)}^{S_j(n)}\rangle$[See Proposition 3.2 in \cite{J-L}]. From proposition \ref{inv}, we find that  if $n=qk+r$ then 
\begin{equation}\label{diff}
\langle x_{C+D\delta(n)}^{S_j(n)+1}-x_{C+D\delta(n)}^{S_j(n)}\rangle=\langle \nu_{CD}^{\Gamma_j(r)+1}(r)-\nu_{CD}^{\Gamma_{j}(r)}(r)\rangle.
\end{equation}
Let $d_{CD}^{j}(r):=\langle \nu_{CD}^{\Gamma_{j}(r)+1}(r)-\nu_{CD}^{\Gamma_{j}(r)}(r)\rangle$. 

Now we will express the value $\zeta(0,(C+D\delta(n))\fb(n))$ using $q$ $k$ $r$ where $n=qk+r$.
We note that form Proposition \ref{orbit}
\begin{equation} \label{zeta-value}
\begin{split}
&\zeta_q(0,(C+D\delta(n))\fb(n))\\
&=\sum_{(A,B)\in Orb(C+D\delta(n))}\sum_{i=1}^{m(n)}-B_1(x_{A+B\delta(n)}^{i})B_1(x_{A+B\delta(n)}^{i-1})+\frac{b_i(n)}{2}B_2(x_{A+B\delta(n)}^{i}).
\end{split}
\end{equation}

For simplification, we define $$F(x,y):=-B_1(x)B_1(y)+B_2(x)=(x-\frac{1}{2})(\frac{1}{2}-y)+x^2-x+\frac{1}{6}.$$

Because  $b_i(n)=2$ if  $i \ne S_j(n)$  for some $j$, 
we can find that 
\begin{equation}\label{divide}
\begin{split}
&\sum_{i=1}^{m(n)}-\big(B_1(x_{C+D\delta(n)}^{i})B_1(y^i_{C+D\delta(n)})+\frac{b_i(n)}2B_2(x_{C+D\delta(n)}^{i})\big)\\
&=\sum_{l=1}^{s\mu(s)}\big(-B_1(x^{S_l(n)}_{C+D\delta(n)})B_1(x^{S_l(n)-1}_{C+D\delta(n)})+\frac{a_{2l}(n)+2}{2}B_2(x^{S_l(n)}_{C+D\delta(n)})\big)\\
&+\sum_{l=0}^{s\mu(s)-1}\sum_{i=S_l(n)+1}^{\substack{S_{l+1}(n)-1}}F(x^i_{C+D\delta(n)},x^{i-1}_{C+D\delta(n)})
\end{split}
\end{equation}

From the fact that  $\{x_{C+D\delta(n)}^i\}_{S_j(n)\leq i \leq S_{j+1}(n)}$ is and arithmetic progression mod $\FZ$ with difference $d_{CD}^j(r)$ for $n=qk+r$, we obtain the following:
If $1\leq\gamma\leq q$ and  $a_{2l+1}(n)\geq \gamma$ then 
\begin{equation}\label{sum1}
\begin{split}
&\sum_{i=S_{l}(n)+1}^{S_{l}(n)+\gamma}F(x_i(n),x_{i-1}(n)) \\
&=\frac{1}{12}\Big{(}6(\gamma d_{l}(r)^2
+(1-2d_{l}(r))[\nu_{\Gamma_{l}(r)}(r)+d_{l}(r)\gamma]_1+B_2(x_{S_{l}(n)+\gamma}(n))-B_2(x_{S_{l}(n)}(n)))-\gamma \Big{)}
\end{split}
\end{equation}
Moreover if $\gamma=q$ then  from the periodicity of $x_i(n)$ [See Proposition \ref{pe}],
we have the periodicity of the values of the 2nd Bernoulli polynomial: 
$$B_2(x_{S_{l}(n)+q}(n))=B_2(x_{S_{l}(n)}(n)). $$
Thus  
\begin{equation}\label{sum2}
\sum_{i=S_{l}(n)+1}^{S_{l}(n)+q}F(x_i(n),x_{i-1}(n))
=\frac{1}{12}\Big{(}6(qd_{l}(r)^2
+(1-2d_{l}(r))[\nu_{\Gamma_{l}(r)}(r)+d_{l}(r)q]_1)-q \Big{)}.
\end{equation}

We note that $Orb(C+D\delta(n))$ is a set depending only $C,D$ and $r$ for $n=qk+r$ [See Proposition \ref{period-orbit}] under the conditions of the following proposition. Thus 
for $n=qk+r$ we can define $$Orb(C+D\delta(n))=:Orb_{CD}(r).$$
\begin{prop}\label{summ}
If $\fb_n^{-1}=[1,\delta(n)]$ and 
$$ \delta(n)-1=[[a_0(n),a_1(n), \cdots a_{s-1}(n)]]$$
for $a_i(x)=\sum_{j=0}^d\alpha_{ij}x^{j}\in\FZ[x]$ and if 
$N((C+D\delta(n))\fb_n) \pmod{q}$ is a function only depending on $C,D$ and $r$
then
we have for $n=qk+r$
$$\zeta_q((C+D\delta(n))\fb_n)=\sum_{(A,B)\in Orb_{CD}(r)}B_{AB}^0(r)+B_{AB}^1(r)k+\cdots B_{AB}^d(r)k^d,$$
where for $m\geq 1,$
\begin{equation*}
\begin{split}
&B_{AB}^m(r)=\frac{q}{2}\sum_{l=1}^{s\mu(s)}A_{2l,m}(r)B_2(\nu_{AB}^{\Gamma_l(r)}(r))\\
&+\frac{1}{12}\sum_{l=0}^{s\mu(s)-1}A_{2l+1,m}(r)\Big{(}6(q{d_{AB}^l(r)}^2+(1-2{d_{AB}^l(r)})[\nu_{AB}^{\Gamma_l(r)}(r)+d_{AB}^l(r)q]_1)-q\Big{)}
\end{split}
\end{equation*}
and 
\begin{equation*}
\begin{split}
&B_{AB}^0(r)=\sum_{l=1}^{s\mu(s)}-B_1(\nu_{AB}^{\Gamma_l(r)}(r))B_1(\nu_{AB}^{\Gamma_l(r)-1}(r))+\frac{q\tau_{2l}(r)+\gamma_{2l}(r)+2}{2}B_2(\nu_{AB}^{\Gamma_l(r)}(r))\\
&+\frac{1}{12}\sum_{l=0}^{s\mu(s)-1}\Big{[}\tau_{2l+1}(r)\Big{(}6(q{d_{AB}^l(r)}^2+(1-2{d_{AB}^l(r)
})[\nu_{AB}^{\Gamma_l(r)}(r)+d_{AB}^l(r) q]_1)-q\Big{)}\\
&+6B_2(\nu_{AB}^{\Gamma_{l+1}(r)-1}(r))-6B_2(\nu_{AB}^{\Gamma_{l}(r)}(r))+\Big{(}(\gamma_{2l+1}(r)-1){d^{l}_{AB}(r)}^2\\
&+(1-2d^{l}_{AB}(r))[\nu^{\Gamma_{l}(r)}_{AB}(r)+d^{l}_{AB}(r)(\gamma_{2l+1}(r)-1)]_1 \Big{)}-\gamma_{2l+1}(r)+1\Big{]}.
\end{split}
\end{equation*}
\end{prop}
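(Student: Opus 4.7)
The plan is to start from the orbit-sum expression \eqref{zeta-value} and then fix one orbit representative $(A,B)$ and analyze the inner sum $\sum_{i=1}^{m(n)} -B_1(x^i_{A+B\delta(n)})B_1(x^{i-1}_{A+B\delta(n)}) + \tfrac{b_i(n)}{2}B_2(x^i_{A+B\delta(n)})$ as a polynomial in $k$ for $n=qk+r$. The key point is that by Proposition \ref{period-orbit} the orbit $Orb(C+D\delta(n))$ itself only depends on $(C,D,r)$, so the outer sum survives intact and the only real work is showing each summand is a polynomial of degree $d$ in $k$ with coefficients of the stated form.

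Having fixed $(A,B)$, I would first apply the decomposition \eqref{divide}, which splits the inner sum into a sum indexed by $l=1,\ldots,s\mu(s)$ over the special indices $i=S_l(n)$ (where $b_i(n)=a_{2l}(n)+2$) plus a sum of $F$-contributions over each block $[S_l(n)+1, S_{l+1}(n)-1]$ of length $a_{2l+1}(n)-1$. For the special indices, Proposition \ref{inv} gives $x^{S_l(n)}_{A+B\delta(n)} = \nu_{AB}^{\Gamma_l(r)}(r)$ and $x^{S_l(n)-1}_{A+B\delta(n)} = \nu_{AB}^{\Gamma_l(r)-1}(r)$, both functions of $r$ alone. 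Combining this with Lemma \ref{degree}, namely $a_{2l}(n) = q\sum_{m\ge 1} A_{2l,m}(r) k^m + q\tau_{2l}(r) + \gamma_{2l}(r)$, I can read off the contribution of the special-index sum as $-B_1(\nu_{AB}^{\Gamma_l(r)}(r))B_1(\nu_{AB}^{\Gamma_l(r)-1}(r)) + \tfrac{q\tau_{2l}(r)+\gamma_{2l}(r)+2}{2}B_2(\nu_{AB}^{\Gamma_l(r)}(r)) + \tfrac{q}{2}\sum_{m\ge 1} A_{2l,m}(r) B_2(\nu_{AB}^{\Gamma_l(r)}(r)) k^m$, which already explains the $l$-th special-index contribution to each $B^m_{AB}(r)$.

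For the block sums of $F$, I would invoke the $q$-periodicity from Proposition \ref{pe}: write $a_{2l+1}(n)-1 = q\bigl(\sum_{m\ge 1} A_{2l+1,m}(r) k^m + \tau_{2l+1}(r)\bigr) + (\gamma_{2l+1}(r)-1)$, partition the block $[S_l(n)+1,S_{l+1}(n)-1]$ into $\sum_{m\ge 1}A_{2l+1,m}(r)k^m + \tau_{2l+1}(r)$ complete periods of length $q$ placed first, each contributing the closed form \eqref{sum2}, followed by a tail of length $\gamma_{2l+1}(r)-1$ to which \eqref{sum1} applies with $\gamma=\gamma_{2l+1}(r)-1$. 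Both evaluations are expressed in terms of $d_{AB}^l(r)$ and $\nu_{AB}^{\Gamma_l(r)}(r)$ via identity \eqref{diff}; the endpoint $x_{S_l(n)+\gamma_{2l+1}(r)-1}(n)$ appearing in the $B_2$-difference of \eqref{sum1} becomes $\nu_{AB}^{\Gamma_{l+1}(r)-1}(r)$ again by Proposition \ref{inv}. Gathering the $k^m$-coefficients for $m\ge 1$ yields the claimed formula for $B^m_{AB}(r)$, and collecting the $k$-free parts (the special $\tau_{2l}(r),\gamma_{2l}(r)$, the $\tau_{2l+1}(r)$-scaled period, and the tail from \eqref{sum1}) gives the stated $B^0_{AB}(r)$.

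The main obstacle is the careful bookkeeping in the block decomposition: one must verify that the $q$-periodicity hypothesis of Proposition \ref{pe} is in force for each block and that the remainder tail lies within the range $0\le i\le \gamma_{2l+1}(r)$ where Proposition \ref{inv} applies, so that the tail sum \eqref{sum1} is legitimately rewritten purely in terms of $\nu$-values. Once these two ``continuity'' checks at the block boundaries are in place, the remainder of the argument is purely symbolic expansion, separating the $k^m$-monomials obtained from $a_{2l}(n)$ and $a_{2l+1}(n)$ via Lemma \ref{degree} and matching coefficients against the formulas in the statement.
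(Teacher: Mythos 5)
Your proposal matches the paper's own proof essentially step for step: it starts from the orbit decomposition of Proposition \ref{orbit}, splits the inner sum via \eqref{divide}, expands $a_{2l}(n)$ and $a_{2l+1}(n)$ in $k$ using Lemma \ref{degree}, collapses the block sums into complete $q$-periods plus a tail using Proposition \ref{pe} together with \eqref{sum1}--\eqref{sum2}, and identifies the endpoint values with $\nu$-quantities via Proposition \ref{inv}, exactly as in the paper's equations \eqref{divide2}, \eqref{su4} and \eqref{su5}--\eqref{su7}. The bookkeeping concerns you flag at the block boundaries are the right ones, and your resolution is the one the paper implicitly uses; no gap.
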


\begin{proof}
Thus from equation (\ref{divide}) and  lemma \ref{degree}, we obtain the following:
\begin{equation}\label{divide2}
\begin{split}
\sum_{i=1}^{m(n)} &\big(B_1(x_{A+B\delta(n)}^{i})B_1(y^i_{A+B\delta(n)})+\frac{b_i(n)}2B_2(x_{A+B\delta(n)}^{i})\big)=\\
&\sum_{l=1}^{s\mu(s)}[-B_1(x^{S_l(n)}_{A+B\delta(n)})B_1(x^{S_l(n)-1}_{A+B\delta(n)})\\
&+
\frac{q\sum_{m=1}^{d}A_{2l,m}(r)k^m+q\tau_{2l}(r)+\gamma_{2l}(r)+2}{2}B_2(x^{S_l(n)}_{A+B\delta(n)})]\\
&+\sum_{l=0}^{s\mu(s)-1}\sum_{i=S_l(n)+1}^{\substack{S_l(n)+q\sum_{m=1}^{d}A_{2l+1,m}(r)k^m\\+q\tau_{2l+1}(r)+\gamma_{2l+1}(r)-1}}F(x^i_{A+B\delta(n)},x^{i-1}_{A+B\delta(n)})
\end{split}
\end{equation}
Since $\{F(x^i_{A+B\delta(n)},x^{i-1}_{A+B\delta(n)})\}_{S_l(n)+1\leq i\leq S_{l+1}(n)-1}$ has period $q$, we have the following:

\begin{equation}\label{su4}
\begin{split}
&\sum_{i=S_l(n)+1}^{\substack{S_l(n)+q\sum_{m=1}^{d}A_{2l+1,m}(r)k^m\\+q\tau_{2l+1}(r)+\gamma_{2l+1}(r)-1}}  F(x^i_{A+B\delta(n)},x^{i-1}_{A+B\delta(n)})\\
=&(\sum_{m=1}^{d}A_{2l+1,m}(r)k^m+\tau_{2l+1}(r)) \sum_{i=S_l(n)+1}^{S_l(n)+q}F(x^i_{A+B\delta(n)},x^{i-1}_{A+B\delta(n)})\\
&+\sum_{i=S_l(n)+1}^{S_l(n)+\gamma_{2l+1}(r)-1}F(x^i_{A+B\delta(n)},x^{i-1}_{A+B\delta(n)})
\end{split}
\end{equation}
We note that
\begin{equation}\label{su5}
x^{S_{l}(n)}_{A+B\delta(n)}=\nu^{\Gamma_l(r)}_{AB}(r)
\end{equation}
\begin{equation}\label{su6}
x^{S_{l}(n)-1}_{A+B\delta(n)}=\nu^{\Gamma_l(r)-1}_{AB}(r)
\end{equation}
\begin{equation}\label{su7}
 x^{S_{l}(n)+\gamma_{2l+1}(r)-1}_{A+B\delta(n)}=\nu^{\Gamma_{l+1}(r)-1}_{AB}(r)
\end{equation}
Form (\ref{sum1}),(\ref{sum2}) and (\ref{su5})-(\ref{su7}), we find that (\ref{su4}) is equal to the following:
\begin{equation}
\begin{split}
&\Big[\sum_{m=1}^{d}A_{2l+1,m}(r)k^m+\tau_{2l+1}(r)\Big{]}\cdot \\
&\Big{[}6(q{d_{AB}^l(r)}^2+(1-2{d_{AB}^l(r)})[\nu_{AB}^{\Gamma_l(r)}(r)+d_{AB}^l(r)q]_1)-q\Big{]}\\&+6B_2(\nu_{AB}^{\Gamma_{l+1}(r)-1}(r))-6B_2(\nu_{AB}^{\Gamma_{l+1}(r)}(r))+6 \Big{(}(\gamma_{2l+1}(r)-1){d^{l}_{AB}(r)}^2\\&+(1-2d^{l}_{AB}(r))[\nu^{\Gamma_{l}(r)}_{AB}(r)+d^{l}_{AB}(r)(\gamma_{2l+1}(r)-1)]_1 \Big{)}-\gamma_{2l+1}(r)+1
\end{split}
\end{equation}
\end{proof}

\noindent{\bf Proof of Theorem 2.4}\\

We note that $\nu^{\Gamma_l(r)}_{AB}(r)$, $\nu^{\Gamma_l(r)-1}_{AB}(r)$ and  $d^{l}_{AB}(r) \in \frac{1}{q}\FZ$. Thus for $i=0,1,\cdots,d,$
$$B_{AB}^i(r)\in \frac{1}{12q^2}\FZ.$$ From Proposition 2.3, we complete the proof of theorem 2.4.

\section{Two examples}
We present here two examples of $(K_n, \fb_n)$ showing polynomial behaviour of 
$\zeta_q(0, \fb_n)$.
For both examples, we take $\fb_n = (C+D\delta(n))O_{K_n}$, for $C+D\delta(n) \in F_{\delta(n)}$.
The first one is a family of real quadratic fields already appeared in a literature dealing the associated class number
one problem. The second family is given by a quartic polynomial . 

\subsection{Case 1: $f(n)=n^2+2$}
The following example is one of  so-called Richad-Degert type. The quasi-linearity was studied in \cite{Lee4} to  solve
the class number one problem for the family. 
From the computation of partial zeta values for ray class in this article, 
one can recover exactly the result in \textit{loc.cit.} for  partial Hecke's L-values associated 
to a mod-$q$  Dirichlet character $\chi$. 

For square free $f(n)=n^2+2$, let $K_n= \FQ(\sqrt{f(n)}$.
We fix $\fb_n =O_{K_n}$ the ring of integers in $K$. 
$O_{K_n}= [1,\delta(n)]$ and

\begin{itemize}
\item $\delta(n) = \sqrt {f(n)} + n + 1$.
\item The continued fraction of $\delta(n) -1$ is
$ [[2n,n]]$.
\item The totally positive fundamental unit $\epsilon_n$ is $n^2+ 1 + n\sqrt{f(n)}$.
\end{itemize}

One can easily check that $N(C+D\delta(n))$ is invariant for $n$ modulo $q$. 
Let $n=qk+r$. 
We can describe  the orbit of $C+D\delta(n)$ by the action of $\epsilon_n$ as follows:

$$\epsilon_n \ast (C_i(n)+D_i(n)\delta(n))=C_{i+1}(n)+D_{i+1}(n) \delta(n),$$
where
\begin{equation*}
\begin{split}
&C_0(n)=C, \,\,\,D_0(n)=D\\
&C_{i+1}(n) =\left<C_i(n)(n^2+1)+D_i(n)(2n^3+n^2+3n+1)\right>_q\\
&D_{i+1}(n)=\left<C_i(n)n+D_i(n)(2n^2+n+1)\right>_q
\end{split}
\end{equation*}
One sees that $Orb(C+D\delta(n))$  depends only  on $C,D$ and $r$. 
For the sake of simplicity and from the periodicity of the orbit, we may well denote $Orb(C+D\delta(n))$ by $Orb_{CD}(r)$.

Now we can express the partial zeta values at $s=0$:
$$\zeta_q(0,(C+D\delta(n))O_{K_n})=A_0(r)+A_1(r)k+\cdots A_d(r)k^d,$$
where 
$$A_i(r)=\sum_{(A,B)\in Orb_{CD}(r)}B^i_{AB}(r),$$
for
\begin{equation*}
\begin{split}
B_{AB}^0(r)&=(\frac{1}{2}-\nu_{AB}^{\Gamma_1(r)}(r))(\nu_{AB}^{\Gamma_1(r)-1}(r)-\frac{1}{2})+\frac{r+1}{6}(6\nu_{AB}^{\Gamma_1(r)}(r)^2-6\nu_{AB}^{\Gamma_1(r)}(r)+1)\\
&+\frac{\tau_1(r)}{12}(6qd_{AB}^0(r)^2-6qd_{AB}^0(r)
+12qd_{AB}^0(r)^2-q) \\
&+\frac{1}{12}(6\nu_{AB}^{\Gamma_1(r)-1}(r)^2-6\nu_{AB}^{\Gamma_1(r)-1}(r)+1)
-\frac{1}{12}(6\nu_{AB}^{0}(r)^2-6\nu_{AB}^{0}(r)+1)\\
&+\frac{1}{2}(\gamma_1(r)-1)d_{AB}^0(r)^2+\frac{1-2d_{AB}^0(r)}{2}(d_{AB}^0(r)\gamma_1(r)-d_{AB}^0(r)\\
&+\nu_{AB}^{0}(r)-\nu_{AB}^{\Gamma_1(r)}(r))-\frac{\gamma_1(r)-1}{12},\\
B_{AB}^1(r)&=
\frac{q^2}{6}(6\nu_{AB}^{\Gamma_1(r)}(r)^2-6\nu_{AB}^{\Gamma_1(r)}(r)+1)\\&+ 
\frac{q}{12}(6q d_{AB}^0(r)^2+6qd_{AB}^0(r)-12qd_{AB}^0(r)^2-q)
\end{split}
\end{equation*}
and
$$d^0_{AB}(r)=\frac{\left<(2r+1)B+A\right>_q}{q}$$
$$\nu_{AB}^0(r)=\frac{\left<B\right>_q}{q}$$
$$\nu_{AB}^{\Gamma_1(r)}(r)=\frac{\left<(2r^2-r)B+(r-1)A\right>_q}{q}$$
$$\nu_{AB}^{\Gamma_1(r)-1}(r)=\frac{\left<(2r^2-3r-1)B+(r-2)A\right>_q}{q}$$
$$\gamma_1(r)=\left<2r+1\right>_q$$
$$\tau_1(r)=\frac{2r+1-\left<2r+1\right>_q}{q}.$$

\subsection{Case 2: $f(n)= 16n^4 + 32n^3 + 24n +3$}\label{case2}
For square free $f(n)=16n^4+32n^3+24n^2+12n+3$, let $K_n:=\FQ(\sqrt{f(n)})$. 
Let us fix $\fb_n = 
O_{K_n}$. If $O_{K_n}$ is $[1,\delta(n)]$ for $\delta(n)$ described as before. 
For this family, we have: 
\begin{itemize}
\item $\delta(n) =\sqrt{f(n)}+[\sqrt{f(n)}]+1,$
\item $\delta(n)-1= [[8n^2+8n+2, 2n+1]].$
\item The totally positive fundamental unit $\epsilon_n$ is $(2n+1)^3+1+(2n+1)\sqrt{f(n)}$.
\end{itemize}

We can again easily check that $N(C+D\delta(n))$ is invariant modulo $q$ for $n=qk+r$. 

For $C+D\delta(n)\in F_{\delta(n)}$, we have
$$\epsilon_n \ast (C_i(n)+D_i(n)\delta(n))=C_{i+1}(n)+D_{i+1}(n) \delta(n),$$
where
\begin{equation*}
\begin{split}
C_0(n)&=C, \quad D_0(n)=D\\
C_{i+1}(n) &=\langle(64n^5+160n^4+168n^3 + 104n^2+ 38n+7) D_i(n)\\
+& (8n^3 +12n^2+6n+2)C_i(n)\rangle_q\\
D_{i+1}(n)&=\left<(16n^3+24n^2+14n+4)D_i(n)+(2n+1)C_i(n) \right>_q
\end{split}
\end{equation*}

Let $n=qk+r$ for $0 \le r < q$. 
Denoting  $Orb(C+D\delta(n))$ by $Orb_{CD}(r)$, the partial zeta value at $0$ is 
$$\zeta_q(0,(C+D\delta(n))O_{K_n})=A_0(r)+A_1(r)k+\cdots + A_d(r)k^d,$$
where $A_i(r)=\sum_{(A,B)\in Orb_{CD}(r)}B^i_{AB}(r)$,
for
\begin{equation*}
\begin{split}
B_{AB}^0(r)=&(\frac{1}{2}-\nu_{AB}^{\Gamma_1(r)}(r))(\nu_{AB}^{\Gamma_1(r)-1}(r)-\frac{1}{2}) \\ & +\frac{8r^2+8r+6}{12}(6\nu_{AB}^{\Gamma_1(r)}(r)^2-6\nu_{AB}^{\Gamma_1(r)}(r)+1)\\
&+\frac{\tau_1(r)}{12}(6qd_{AB}^0(r)^2-6qd_{AB}^0(r)+12qd_{AB}^0(r)^2-q)\\ 
&+\frac{1}{12}(6\nu_{AB}^{\Gamma_1(r)-1}(r)^2-6\nu_{AB}^{\Gamma_1(r)-1}(r)+1)\\
&-\frac{1}{12}(6\nu_{AB}^{0}(r)^2-6\nu_{AB}^{0}(r)+1)+\frac{1}{2}(\gamma_1(r)-1)d_{AB}^0(r)^2\\
&+\frac{1-2d_{AB}^0(r)}{2}(d_{AB}^0(r)\gamma_1(r)-d_{AB}^0(r)+\nu_{AB}^{0}(r)-\nu_{AB}^{\Gamma_1(r)}(r))\\ &-\frac{\gamma_1(r)-1}{12},\\
B_{AB}^1(r) = &
\frac{2q^2+4q^2r}{3}(6\nu_{AB}^{\Gamma_1(r)}(r)^2-6\nu_{AB}^{\Gamma_1(r)}(r)+1)\\&+
\frac{q}{6}(6q d_{AB}^0(r)^2+6qd_{AB}^0(r)-12qd_{AB}^0(r)^2-q),\\
B_{AB}^2(r)=&\frac{2q^3}{3}(6\nu_{AB}^{\Gamma_1(r)}(r)^2-6\nu_{AB}^{\Gamma_1(r)}(r)+1),
\end{split}
\end{equation*}
and
\begin{equation*}
\begin{split}
d^0_{AB}(r)&=\frac{\left<(8r^2+8r+3)B+A\right>_q}{q}\\
\nu_{AB}^0(r)&=\frac{\left<B\right>_q}{q}\\
\nu_{AB}^{\Gamma_1(r)}(r)&=\frac{\left<(4r^2+2r+1)B+2rA\right>_q}{q}\\
\nu_{AB}^{\Gamma_1(r)-1}(r)&=\frac{\left<4r^2B+(2r-1)A\right>_q}{q}\\
\gamma_1(r)&=\left<2r+1\right>_q\\
\tau_1(r)&=\frac{2r+1-\left<2r+1\right>_q}{q}.
\end{split}
\end{equation*}

\begin{remark}
The family of \ref{case2} has not been touched in literature in the context of class number problem
or other particular problems in arithmetic. Especially, most known families of real quadratic fields,  where class number problems are solved,
are Richaud-Decherd type which is generated by some quadratic polynomials. It would be highly 
interesting if one could answer those questions for other types of families than R-D types. 
\end{remark}

\end{document}